\documentclass[a4paper,reqno,10pt]{amsart}

\usepackage{amsfonts,amssymb,amsthm,amsmath,amscd,mathtools,mathrsfs}
\usepackage{graphicx,color,xcolor,cite,leftidx,comment,enumitem}
\usepackage{fancybox,multirow,makecell,caption,subcaption}
\usepackage{tasks,thmtools,array,booktabs}
\usepackage[all,2cell,cmtip]{xy}
\xyoption{curve}
\usepackage{tikz,tikz-cd,float}
\usetikzlibrary{arrows,decorations.pathmorphing,decorations.markings,
backgrounds,positioning,fit,petri,patterns,matrix,shapes.geometric}
\usepackage[hypertexnames=false]{hyperref}
\usepackage[nameinlink]{cleveref}

\let\shorttwoheadrightarrow\twoheadrightarrow
\renewcommand{\hookrightarrow}{\lhook\joinrel\longrightarrow}
\renewcommand{\twoheadrightarrow}{\relbar\joinrel\relbar\joinrel
\shorttwoheadrightarrow}

\hypersetup{
  colorlinks=true,
  linkcolor=blue!55!black,
  citecolor=green!45!black,
  urlcolor=blue!60!black,
  pdftitle={A pre-triangulated category which is not triangulated},
  pdfauthor={Xiao-Wu Chen, Jian Liu, Xue-Song Lu, Chencheng Zhang}
}

\declaretheoremstyle[headfont=\bfseries,bodyfont=\itshape]{plainstyle}
\declaretheoremstyle[headfont=\bfseries,bodyfont=\normalfont]{defstyle}
\declaretheoremstyle[headfont=\bfseries,bodyfont=\normalfont]{remarkstyle}
\declaretheorem[style=plainstyle,numberwithin=section,name=Theorem]{theorem}
\declaretheorem[style=plainstyle,sibling=theorem,name=Lemma]{lemma}
\declaretheorem[style=plainstyle,sibling=theorem,name=Proposition]{proposition}
\declaretheorem[style=plainstyle,sibling=theorem,name=Corollary]{corollary}

\declaretheorem[style=defstyle,sibling=theorem,name=Definition]{definition}
\declaretheorem[style=defstyle,sibling=theorem,name=Convention]{convention}
\declaretheorem[style=plainstyle,sibling=theorem,name=Conjecture]{conjecture}
\declaretheorem[style=remarkstyle,sibling=theorem,name=Remark]{remark}
\numberwithin{equation}{section}

\newtheorem*{ack}{Acknowledgements}

\DeclareMathOperator{\modu}{mod}
\DeclareMathOperator{\End}{End}
\DeclareMathOperator{\Hom}{Hom}
\newcommand{\kk}{\mathbb F_2}
\newcommand{\cF}{\mathcal F}
\newcommand{\cO}{\mathcal O}
\newcommand{\Lam}{\Lambda}
\newcommand{\eps}{\epsilon}
\newcommand{\ol}[1]{\underline{#1}}

\title[A pre-triangulated category which is not triangulated]
{A pre-triangulated category which is not triangulated}

\author{Xiao-Wu Chen}
\address{School of Mathematical Sciences, University of Science and
Technology of China, Hefei 230026, Anhui Province, P. R. China}
\email{xwchen@mail.ustc.edu.cn}

\author{Jian Liu}
\address{School of Mathematics and Statistics, and Hubei Key Laboratory of Mathematical Sciences, Central China Normal University, Wuhan 430079, P. R. China}
\email{jianliu@ccnu.edu.cn}

\author{Xue-Song Lu}
\address{School of Mathematical Sciences, Shanghai Jiao Tong University,
Shanghai 200240, P. R. China}
\email{leocedar@sjtu.edu.cn}

\author{Chencheng Zhang}
\address{School of Mathematical Sciences, Shanghai Jiao Tong University,
Shanghai 200240, P. R. China}
\email{zhangchencheng@sjtu.edu.cn}

\subjclass[2020]{Primary 18G80; Secondary 16G20, 16G70}
\keywords{pre-triangulated category, triangulated category, octahedral axiom, Heller comparison, preprojective algebra}

\begin{document}

\begin{abstract}
In this article, we construct an explicit pre-triangulated category which is not a triangulated category.  Its underlying additive category is the category of finitely generated projective modules of the type-$A_5$ preprojective algebra over $\mathbb F_2$, and the suspension is induced by the graph-reflection automorphism. 
\end{abstract}

\maketitle

\section{Introduction}

In the 1960s, Grothendieck introduced derived categories; see \cite{Hartshorne1966}. Verdier \cite{Verdier1977} subsequently developed the theory of \emph{triangulated categories} and formulated the four axioms $\mathbf{TR1}$--$\mathbf{TR4}$ that characterize their structure.
Since then, triangulated categories have become a fundamental framework in homological algebra,
algebraic geometry, and representation theory. Classical examples of triangulated categories
include derived categories and stable categories of Frobenius categories; see
\cite{Happel1988, Neeman2001}.
Among these axioms, the fourth one, the octahedral axiom $\mathbf{TR4}$, controls the interaction
between composition of morphisms and mapping cones.
If only $\mathbf{TR1}$--$\mathbf{TR3}$ are required, the resulting
structure is called \emph{pre-triangulated}. A pre-triangulated category is also called Puppe-triangulated; see \cite{Neeman2001, Puppe1967}.

Let $\mathcal B$ be an abelian Frobenius category. That is, $\mathcal B$ is an abelian category with enough projective objects and enough injective objects, and the projective objects coincide with the injective objects. A classical result of Happel \cite{Happel1988} shows that the stable category of $\mathcal B$ modulo projective objects, denoted by $\underline{\mathcal B}$, admits a triangulated structure. Its suspension functor is given by the cosyzygy functor
$\Omega^{-1}\colon \underline{\mathcal B}\xrightarrow \cong \underline{\mathcal B}$.
The category $\mathcal B$ is called \emph{$\Sigma$-stable} in the sense of Beligiannis \cite{Beligiannis2000} if there exists a stable structure $(\Sigma,\delta)$. Here, $(\Sigma,\delta)$ is a stable structure provided that $\Sigma\colon \mathcal B\to\mathcal B$ is an auto-equivalence and there exists a natural isomorphism
$\delta\colon \Sigma\xrightarrow \cong \Omega^{-3}$
in $\underline{\mathcal B}$ such that
$\delta_{\Omega^{-1} N}=-\Omega^{-1}(\delta_N)\circ \sigma_N$ for each $N\in \mathcal B$, where $\sigma: \Sigma\Omega^{-1}\xrightarrow \cong \Omega^{-1}\Sigma$ is the natural isomorphism induced by $\Sigma$.

Given an abelian Frobenius category $\mathcal B$ with an auto-equivalence 
$\Sigma\colon \mathcal B\xrightarrow\cong \mathcal B$, a $\Sigma$-stable structure $(\Sigma,\delta)$ on $\mathcal B$ 
gives rise to a pre-triangulated category $(\mathcal P,\Sigma,\triangle_\delta)$ constructed by Heller \cite{Heller1968}. 
Here, $\triangle_\delta$ consists of all distinguished triangles in this category, and 
$\mathcal P$ denotes the full subcategory of projective objects of $\mathcal B$; see \Cref{thm:heller}. 
Moreover, Heller \cite[Theorem~16.4]{Heller1968} showed that this construction establishes a one-to-one correspondence between $\Sigma$-stable structures on $\mathcal B$ and pre-triangulated category structures on $(\mathcal P,\Sigma)$. 
Amiot found a sufficient condition ensuring that the pre-triangulated structure constructed by Heller's method is triangulated, and applied this result to deformed preprojective algebras; see \cite[Section 9]{Ami07}.

It is natural to ask whether every pre-triangulated category must be triangulated. 
This question remained open for a long time. 
In \cite[Conjecture~9.9]{Beligiannis2000}, Beligiannis formulated the following conjecture, which he attributed to Keller and Neeman.

\begin{conjecture}
There exists an abelian Frobenius category $\mathcal B$ with a $\Sigma$-stable structure $(\Sigma,\delta)$ such that the induced pre-triangulated category $(\mathcal P,\Sigma,\triangle_\delta)$ is not triangulated. Here, $\mathcal P$ is the full subcategory of $\mathcal B$ consisting of all projective objects.
\end{conjecture}

The main result of this article is \Cref{thm:main}, which confirms the above conjecture. It shows that $\mathbf{TR4}$ is not
a formal consequence of $\mathbf{TR1}$--$\mathbf{TR3}$ in general.

 In this article, we study the preprojective algebra $\Pi(A_5)$ of the Dynkin quiver of type $A_5$ over $\mathbb F_2$. 
This algebra is self-injective. In particular, the category $\modu \Pi(A_5)$ of finitely generated right modules is an abelian Frobenius category. 
Thus, its stable category $\ol{\modu}\Pi(A_5)$ is a triangulated category. 
Moreover, the category $\mathcal P(\Pi(A_5))$ of finitely generated projective $\Pi(A_5)$-modules admits a triangulated category structure; see \Cref{conv:transported-triangulation}. 
The corresponding $\Sigma$-stable structure on $\modu \Pi(A_5)$ is given by the pair $(\Sigma, \theta_0)$, where $\theta_0:\Sigma\xrightarrow{\sim}\Omega^{-3}$ is an isomorphism on $\ol{\modu}\Pi(A_5)$; here, $\Sigma$ is induced by the graph-reflection automorphism; see \Cref{conv:transported-triangulation}.

By choosing a module $M$ whose second syzygy is isomorphic
to itself, we construct a twisted comparison
$$\theta_\eps:\Sigma\xrightarrow{\sim}\Omega^{-3}$$ 
such that $(\Sigma, \theta_\epsilon)$ is a stable structure; see \Cref{prop:twisted-comparison}.
Thus Heller's theorem \cite[Theorem 16.4]{Heller1968} yields a pre-triangulated category
$(\mathcal P(\Pi(A_5)), \Sigma, \triangle_{\theta_\eps})$ satisfying $\mathbf{TR1}$--$\mathbf{TR3}$. The following main result in this article shows that
$\mathbf{TR4}$ fails for $\triangle_\eps$.

\begin{theorem}\label{thm:main}
The abelian Frobenius category $\modu \Pi(A_5)$ admits a $\Sigma$-stable structure $(\Sigma, \theta_\eps)$ such that the induced pre-triangulated category $(\mathcal P(\Pi(A_5)), \Sigma, \triangle_{\theta_\eps})$ is not triangulated. 
\end{theorem}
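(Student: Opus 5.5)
We first construct the twisted comparison, then show that every triangulated structure on $(\mathcal P(\Pi(A_5)),\Sigma)$ would force a constraint on it that fails. Throughout we write $\Lam=\Pi(A_5)$ over $\kk$.

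\emph{Step 1: the twisted comparison.} Start from the stable structure $(\Sigma,\theta_0)$ of \Cref{conv:transported-triangulation}. Any other comparison has the form $\theta_\eps=\theta_0\circ\eps$, where $\eps$ is a natural automorphism of $\Sigma$ on $\ol{\modu}\Lam$. The compatibility $\delta_{\Omega^{-1}N}=-\Omega^{-1}(\delta_N)\circ\sigma_N$ for $\theta_\eps$ reduces to a commutation condition between $\eps$ and $\Omega^{-1}$. We take the module $M$ with $\Omega^2M\cong M$ and define $\eps$ to be the identity except on the additive closure of the $\Omega$-orbit of $M$ (and its $\Sigma$-translates). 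There we multiply by a unit $1+z$ of $\underline{\End}(M)$, where $z$ is a nilpotent central element chosen to be compatible with $\Omega^2M\cong M$. Over $\kk$ the sign is irrelevant, which is what makes this twist consistent. By \Cref{prop:twisted-comparison}, $(\Sigma,\theta_\eps)$ is a stable structure, and Heller's \Cref{thm:heller} then gives the pre-triangulated category $(\mathcal P(\Lam),\Sigma,\triangle_{\theta_\eps})$.

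\emph{Step 2: an invariant obstructing TR4.} Assume $\triangle_{\theta_\eps}$ satisfies TR4. The approach is to use a Toda-bracket or Massey-product type invariant: in a triangulated category, octahedra make triple Toda brackets associative and compatible with composition. For Heller's construction, these brackets are computed in $\modu\Lam$ through the comparison $\delta$. Concretely, pick composable morphisms $f\colon P_0\to P_1$ and $g\colon P_1\to P_2$ of projectives whose cokernels involve $M$. Apply the octahedral axiom to $f$, $g$ and $gf$. This produces a triangle on the cones, which are the images of $\Omega^{-1}$ of the relevant modules together with projective summands. It also forces an equality of connecting maps $\Sigma$-translated through $\delta$. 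Computed with $\theta_0$, which is genuinely triangulated, this equality holds. Computed with $\theta_\eps$, the two sides differ by the factor $\eps$ evaluated at two different objects in the orbit of $M$.

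\emph{Step 3: the main obstacle.} TR4 only asserts the existence of some octahedron, so freedom in the choice of the third triangle could absorb the twist. We must therefore show that no choice of fill-in morphisms, and no isomorphism of the resulting triangles, removes the discrepancy $z$. First we would reduce to an invariant of a finite-dimensional computation: the class of $z$ modulo the images of $\underline{\End}$ of the cones under the actions induced by the fill-ins. Then we would verify by explicit calculation with the $\Lam$-modules over $\kk$ (using the Loewy structure of $\Pi(A_5)$ and the graph reflection) that $z$ is not in this subspace. This is where $A_5$ is needed, rather than smaller $A_n$: smaller stable categories have too few nonzero stable endomorphisms to support a non-removable nilpotent $z$. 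This linear algebra check is the crux; the remaining steps are formal applications of Heller's correspondence.
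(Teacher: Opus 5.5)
Your Step 1 matches the paper's construction, and you may cite \Cref{prop:twisted-comparison} for it. However, you describe $z$ incorrectly. Centrality in $\ol{\End}_\Lam(M)\cong\kk[\rho]/(\rho^2)$ is automatic, and it is not what makes $\mathrm{Id}+z$ natural. Since $\eps$ is the identity away from the orbit $\{M,\Omega(M)\}$, naturality against morphisms between the orbit and other indecomposables forces $z\circ f=0$ for every non-retraction $f$ into $M$, and $g\circ z=0$ for every non-section $g$ out of $M$. This is why the paper takes $z=\rho$, the almost vanishing connecting morphism of the Auslander--Reiten triangle ending at $M$ (\Cref{lem:orbit-twist}). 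The same almost-vanishing property is what later produces the obstruction.

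The genuine gap is in Steps 2--3. You correctly observe that $\mathbf{TR4}$ only asserts existence, so the freedom in the fill-ins might absorb the twist. But you never fix a test configuration, and you give no argument that absorption is impossible. Your ``invariant'' (the class of $z$ modulo images induced by fill-ins) is not actually defined, and the decisive check is deferred to an unspecified calculation. That check is the whole content of the theorem. The paper supplies it with three ingredients your plan lacks.

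(a) The input triangles are chosen to be insensitive to the twist. The rows $T_A,T_B$ of \Cref{lem:complete-horseshoe} have first kernels $A$ and $B$. Here $A$ is the middle term of the Auslander--Reiten sequence $0\to M\to A\to\Omega(M)\to0$, and $B$ is the middle term of the nonsplit extension $0\to A\to B\to M\to0$. Neither has a stable summand in the orbit, so these rows, together with the split columns, lie in both $\triangle_0$ and $\triangle_\eps$. Your choice of maps ``whose cokernels involve $M$'' goes the opposite way and leaves the comparisons of the inputs uncontrolled.

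(b) A rigidity lemma (\Cref{lem:relative-filler}): every third vertical arrow completing $(x_0,x_1)$ equals $sx_2$ for an automorphism $s$ of $T_B$ fixing its first two terms. This rests on the surjectivity of $(Su)^*\colon\ol{\Hom}_\Lam(SB,B)\to\ol{\Hom}_\Lam(SA,B)$. That surjectivity comes from $\ol{\Hom}_\Lam(\Omega(M),B)=0$, $q\eta=\rho$, and $\ell\rho=0$ for the non-section $\ell\colon M\to B$.

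(c) Since $x_2$ is a split monomorphism, its cone is forced. Hence the third row of any completion of the $3\times3$ diagram (\Cref{lem:tr4-implies-fixed-boundary}) is the horseshoe row $T_M$. Its comparison is $(\lambda_0)_M\neq(\mathrm{Id}_M+\rho)(\lambda_0)_M=(\lambda_\eps)_M$, which gives the contradiction.

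Working with the $3\times3$ lemma plus this rigidity, rather than a raw octahedron or a Toda-bracket invariant, is what makes ``no fill-in removes the discrepancy'' provable. Without an argument of this kind, your proposal does not establish the theorem. Your claim that $A_5$ is needed because smaller $A_n$ have too few stable endomorphisms is unsupported and plays no role.
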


$\triangle_{\theta_{\eps}}$ is simply written as $\triangle_{\eps}$ in the main text of the article. The proof of the above result is given at the end of
\Cref{sec:example}. It exhibits two distinguished rows and two distinguished
columns which admit no four-by-four completion. The key point is a relative
rigidity statement: every possible third vertical arrow can be normalized by
an automorphism of the second row, after which the third row is forced to have
the original Heller comparison rather than the twisted one.

\section{Preliminaries}\label{sec:preliminaries}

\subsection{(Pre)triangulated categories}\label{sec:heller}

Throughout, $k=\mathbb F_2$. All modules are finite-dimensional right modules.
For a finite-dimensional algebra $A$, let $\mathcal P(A)$ denote the full
subcategory of finitely generated projective right $A$-modules.

\begin{definition}\label{def:stable-category}
If $X$ and $Y$ are $A$-modules, set
\[
 \mathcal P(X,Y)=
 \{g\circ f\mid
 X\xlongrightarrow{f}P\xlongrightarrow{g}Y
 \ \text{ for some projective module } P\}.
\]
That is, $\mathcal P(X,Y)$ is the subspace of morphisms which factor through a
projective module, and the \emph{stable Hom-space} is
\[
 \ol{\Hom}_A(X,Y)
 =\Hom_A(X,Y)/\mathcal P(X,Y).
\]
The \emph{stable category} has the same objects and these stable Hom-spaces. For a Frobenius category $\mathfrak B$,
we write $\Omega$ for the syzygy autoequivalence of its stable category and
$S=\Omega^{-1}$ for the stable suspension.
\end{definition}

\begin{definition}\label{def:verdier-axioms}
Let $\mathcal T$ be an additive category. An \emph{autoequivalence} of
$\mathcal T$ is an additive functor
$\Sigma:\mathcal T\longrightarrow\mathcal T$ with an additive
quasi-inverse; we call it the \emph{suspension}. A
\emph{candidate $3$-$\Sigma$-sequence} is a sequence
\[
\xymatrix@C=38pt{
X\ar[r]^-{u}&Y\ar[r]^-{v}&Z\ar[r]^-{w}&\Sigma X.
}
\]
No vanishing condition is imposed in this definition. Its
\emph{forward rotation} is
\[
\xymatrix@C=38pt{
Y\ar[r]^-{v}&Z\ar[r]^-{w}&\Sigma X\ar[r]^-{-\Sigma u}&\Sigma Y.
}
\]
Let $\triangle$ be a class of candidate $3$-$\Sigma$-sequences; its
members are called \emph{distinguished}. We use Verdier's original axioms
\cite[Definition~1-1]{Verdier1977}; see also \cite[\S1.1]{Neeman2001}.

\begin{description}[leftmargin=3.6em,style=nextline]
\item[$\mathbf{TR1}$]
The class $\triangle$ is closed under isomorphisms. For every morphism $u:X\longrightarrow Y$, there are
$Z,v,w$ for which the resulting candidate sequence is distinguished. For
every object $X$, the sequence
\[
\xymatrix@C=38pt{
X\ar[r]^-{\mathrm{Id}_X}&X\ar[r]&0\ar[r]&\Sigma X
}
\]
is distinguished.

\item[$\mathbf{TR2}$]
A candidate sequence is distinguished if and only if its forward
rotation is distinguished.

\item[$\mathbf{TR3}$]
Given distinguished rows and a commutative square on their first maps, a
third vertical map exists which makes the entire diagram commute:
\[
\xymatrix@C=35pt@R=15pt{
X\ar[r]^-{u}\ar[d]_-{a}&
Y\ar[r]^-{v}\ar[d]_-{b}&
Z\ar[r]^-{w}\ar@{-->}[d]^-{c}&
\Sigma X\ar[d]^-{\Sigma a}\\
X'\ar[r]_-{u'}&Y'\ar[r]_-{v'}&Z'\ar[r]_-{w'}&\Sigma X'.
}
\]
Thus $b\circ u=u'\circ a$, $c\circ v=v'\circ b$ and $w'\circ c=\Sigma a\circ w$.
The dotted map need not be unique. The resulting triple $(a,b,c)$ is a
\emph{morphism of candidate $3$-$\Sigma$-sequences}. 

\item[$\mathbf{TR4}$]
For any distinguished triangles
$X\xlongrightarrow{u}Y\xlongrightarrow{i}X'
\xlongrightarrow{i'}\Sigma X$,
$Y\xlongrightarrow{v}Z\xlongrightarrow{j}Z'
\xlongrightarrow{j'}\Sigma Y$, and
$X\xlongrightarrow{v\circ u}Z\xlongrightarrow{k}Y'
\xlongrightarrow{k'}\Sigma X$, there are morphisms
$u':X'\longrightarrow Y'$ and $v':Y'\longrightarrow Z'$ such that the
following diagram commutes:
\[
\xymatrix@C=30pt@R=15pt{
X\ar[r]^-u\ar@{=}[d]&
Y\ar[r]^-i\ar[d]^-v&
X'\ar[r]^-{i'}\ar[d]^-{u'}&
\Sigma X\ar@{=}[d]\\
X\ar[r]^-{v\circ u}&
Z\ar[r]^-k\ar[d]_-j&
Y'\ar[r]^-{k'}\ar[d]^-{v'}&
\Sigma X\ar[d]^-{\Sigma u}\\
&Z'\ar@{=}[r]\ar[d]_-{j'}&
Z'\ar[r]^-{j'}\ar[d]^-{(\Sigma i)\circ j'}&
\Sigma Y\\
&\Sigma Y\ar[r]_-{\Sigma i}&\Sigma X'.
}
\]
Moreover, the sequence
$X'\xlongrightarrow{u'}Y'\xlongrightarrow{v'}Z'
\xlongrightarrow{(\Sigma i)\circ j'}\Sigma X'$ is distinguished.
This is Verdier's original \emph{octahedral axiom}, unfolded into the plane.
\end{description}

Recall that $(\mathcal T,\Sigma,\triangle)$ is \emph{pre-triangulated} if it
satisfies $\mathbf{TR1}$--$\mathbf{TR3}$, and \emph{triangulated} if it also satisfies
$\mathbf{TR4}$; see \cite[\S9.1]{Beligiannis2000}. 
For a Frobenius category $\mathfrak B$,  a classical result of Happel \cite[Theorem 2.6]{Happel1988} shows that the stable category of $\mathfrak B$ with suspension functor $\Omega^{-1}$ can form a triangulated category. 
\end{definition}

\begin{remark}\label{rem:consecutive-zero}
Under
$\mathbf{TR1}$--$\mathbf{TR3}$, the composition of any two consecutive morphisms in a distinguished triangle is zero; see  \cite[Remark~1.1.3]{Neeman2001}. Namely, if $(\mathcal T,\Sigma,\triangle)$ is a pre-triangulated category, then for each distinguished triangle $
\xymatrix@C=38pt{
X\ar[r]^-{u}&Y\ar[r]^-{v}&Z\ar[r]^-{w}&\Sigma X
}
$
in $\triangle$, one has $w\circ v=0, v\circ u=0$ and $(\Sigma u)\circ w=0$. 
\end{remark}

\begin{definition}\label{def:fixed-boundary}
A pre-triangulated category has the \emph{four-by-four property} if every choice of distinguished first two rows and first two
columns in the diagram
\[
\xymatrix@C=27pt@R=15pt{
A_0\ar[r]^-{a_0}\ar[d]_-{x_0}&
A_1\ar[r]^-{a_1}\ar[d]_-{x_1}&
A_2\ar[r]^-{a_2}\ar@{-->}[d]^-{x_2}&
\Sigma A_0\ar[d]^-{\Sigma x_0}\\
B_0\ar[r]^-{b_0}\ar[d]_-{y_0}&
B_1\ar[r]^-{b_1}\ar[d]_-{y_1}&
B_2\ar[r]^-{b_2}\ar@{-->}[d]^-{y_2}&
\Sigma B_0\ar[d]^-{\Sigma y_0}\\
C_0\ar@{-->}[r]^-{c_0}\ar[d]_-{z_0}&
C_1\ar@{-->}[r]^-{c_1}\ar[d]_-{z_1}&
C_2\ar@{-->}[r]^-{c_2}\ar@{-->}[d]^-{z_2}&
\Sigma C_0\ar[d]^-{-\Sigma z_0}\\
\Sigma A_0\ar[r]_-{\Sigma a_0}&
\Sigma A_1\ar[r]_-{\Sigma a_1}&
\Sigma A_2\ar[r]_-{-\Sigma a_2}&
\Sigma^2 A_0
}
\]
whose north-west square commutes can be completed so that all rows and
columns are distinguished and all small squares commute, except for the
south-east square, which anticommutes.
\end{definition}

\begin{lemma}\label{lem:tr4-implies-fixed-boundary}
Every triangulated category has the four-by-four property.
\end{lemma}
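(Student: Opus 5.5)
This is the classical $3\times 3$ lemma (in its $4\times4$ form) for triangulated categories, in the version of May's ``Additivity of traces'' and Beilinson--Bernstein--Deligne, Proposition~1.1.11. I would prove it by a direct octahedral construction applied to the composite along the diagonal of the north-west square. The data are the distinguished rows $A_0\to A_1\to A_2\to\Sigma A_0$ and $B_0\to B_1\to B_2\to\Sigma B_0$, and the distinguished columns $A_0\xrightarrow{x_0}B_0\xrightarrow{y_0}C_0\xrightarrow{z_0}\Sigma A_0$ and $A_1\xrightarrow{x_1}B_1\xrightarrow{y_1}C_1\xrightarrow{z_1}\Sigma A_1$. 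Put $d=b_0x_0=x_1a_0\colon A_0\to B_1$. By $\mathbf{TR1}$, choose a distinguished triangle $A_0\xrightarrow{d}B_1\to D\to\Sigma A_0$.

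The construction then applies $\mathbf{TR4}$ several times. Applied to the factorization $d=b_0\circ x_0$, it uses the column through $x_0$, the row through $b_0$, and the $d$-triangle. This yields a distinguished triangle $C_0\to D\to B_2\to\Sigma C_0$, together with maps $D\to B_2$ compatible with $b_1$ and $C_0\to D$. Applied to $d=x_1\circ a_0$, using the row through $a_0$, the column through $x_1$, and the $d$-triangle, it yields a distinguished triangle $A_2\to D\to C_1\to\Sigma A_2$ with compatible maps. Next, complete the composite $C_0\to D\to C_1$ to a distinguished triangle $C_0\xrightarrow{c_0}C_1\xrightarrow{c_1}C_2\xrightarrow{c_2}\Sigma C_0$. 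A third octahedron on $C_0\to D\to C_1$ then gives a distinguished triangle $A_2\to B_2\to C_2\to\Sigma A_2$, which, after rotation, supplies $x_2$, $y_2$ and $z_2$. The wrinkle is that $x_2$ must make the relevant squares commute with the given maps. I would check that the composite $A_2\to D\to B_2$ produced by the octahedra satisfies $x_2a_1=b_1x_1$ and $b_2x_2=(\Sigma x_0)a_2$. Both follow from the commutative squares in the two octahedra, since $A_1\to B_1\to D$ and $A_1\to A_2\to D$ agree.

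The remaining verification is the commutativity of every small square and the anticommutativity of the south-east one. Each such square is read off from one of the octahedral diagrams, and the $C$-row is identified as $c_0=(C_0\to D\to C_1)$. The sign in the south-east square arises from the sign $-\Sigma u$ in $\mathbf{TR2}$ when rotating. I expect this sign and compatibility bookkeeping, especially matching $c_1,c_2$ from the third octahedron with $y_1,y_2,z_1,z_2$, to be the main obstacle. There, I would follow BBD~1.1.11 closely, replacing a map by its negative when necessary, which is allowed since $\triangle$ is closed under isomorphism. Alternatively, one may simply cite BBD, Proposition~1.1.11, which states exactly this property for triangulated categories.
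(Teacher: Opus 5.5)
Your proposal is correct and follows essentially the same route as the paper. The paper simply cites the classical $3\times3$ lemma (May, Lemma~2.6; equivalently BBD~1.1.11) as a consequence of $\mathbf{TR4}$, and your three-octahedron argument is the standard proof of that lemma. Because your octahedra take the prescribed rows and columns as inputs, you obtain exactly the fixed-boundary form the paper needs, and the remaining squares, including the anticommuting south-east one, do follow from the octahedral identities under the paper's sign conventions with no further sign changes.
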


\begin{proof}
This is the three-by-three lemma, which follows from the octahedral axiom;
see \cite[Lemma~2.6]{May2001}. We use only this implication.
\end{proof}

\begin{convention}\label{conv:heller-coherence}
Let $A$ be a finite-dimensional self-injective algebra, set
$\cF=\mathcal P(A)$, and let $\Sigma$ be an exact autoequivalence of
$\modu A$ which restricts to $\cF$. Write $S=\Omega^{-1}$ for the
suspension of $\ol{\modu}A$. Fixed projective-injective resolutions give a
natural isomorphism
\[
 \sigma_N:\Sigma(SN)\xlongrightarrow{\sim}S(\Sigma N).
\]
Thus $(\Sigma,\sigma)$ is a triangle functor. A natural isomorphism
$\theta:\Sigma\longrightarrow S^3$ is an isomorphism of triangle functors
precisely when
\begin{equation}\label{eq:triangle-functor-compatibility}
 \theta_{SN}=-S(\theta_N)\circ\sigma_N;
\end{equation}
see \cite[Theorem~16.4]{Heller1968}, the sign convention here is from the forward rotation.
\end{convention}

\begin{definition}
\label{def:complete-comparison}
A candidate $3$-$\Sigma$-sequence in $\cF$ is \emph{exact} if its
$\Sigma$-periodic continuation is an exact sequence of $A$-modules. In
particular, all consecutive composites vanish.

Denote an exact candidate
$3$-$\Sigma$-sequence by
\[
 \mathfrak s=
 \bigl(X\xlongrightarrow{u}Y\xlongrightarrow{v}Z
 \xlongrightarrow{w}\Sigma X\bigr),
 \qquad N_{\mathfrak s}=\operatorname{Ker}u.
\]
The sequence $\mathfrak s$ begins an injective resolution of
$N_{\mathfrak s}$. Comparing it with a fixed injective resolution yields a
stable isomorphism
\[
  \delta_{\mathfrak s}:\Sigma N_{\mathfrak s}
  \longrightarrow\Omega^{-3}(N_{\mathfrak s}).
\]
We call this the \emph{complete Heller comparison}. 
\end{definition}

The following classification is due to Heller; its formulation by exact
$3$-$\Sigma$-sequences is due to Geiss--Keller--Oppermann.

\begin{theorem}[{\cite[Theorem~16.4]{Heller1968};
\cite[Lemma~2.3 and Proposition~2.4]{GeissKellerOppermann2013}}]
\label{thm:heller}
In the situation of \Cref{conv:heller-coherence}, for every isomorphism of
triangle functors
\[
 \theta:(\Sigma,\sigma)\xlongrightarrow{\sim}
(\Omega^{-3},-\mathrm{Id}_{\Omega^{-4}}),
\]
the exact candidate $3$-$\Sigma$-sequences $\mathfrak s$ satisfying $\delta_{\mathfrak s}=\theta_{N_{\mathfrak s}}$
form a class $\triangle_\theta$ and make
$(\cF,\Sigma,\triangle_\theta)$ pre-triangulated. Every such structure on
$(\cF,\Sigma)$ arises uniquely in this way.
\end{theorem}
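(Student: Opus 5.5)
The statement immediately above is Heller's classification (\Cref{thm:heller}). The paper quotes it from \cite{Heller1968} and \cite{GeissKellerOppermann2013}, so what follows is an outline of how the argument runs rather than a new proof. The plan has three parts.

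\emph{Well-definedness of $\triangle_\theta$ and $\mathbf{TR1}$.} First I will check that $\delta_{\mathfrak s}$ is independent of the chosen comparison maps. Any two lifts of $\mathrm{Id}_{N_{\mathfrak s}}$ between injective resolutions are homotopic, so the induced maps on the third cosyzygy agree modulo projectives. The same argument shows that $\delta_{\mathfrak s}$ is natural in morphisms of exact sequences. From naturality it follows that $\triangle_\theta$ is closed under isomorphism.

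For the existence part of $\mathbf{TR1}$, take $u:X\to Y$ and set $N=\operatorname{Ker}u$. Choose an injective envelope of $\operatorname{Coker}u$ to obtain $Z$, and continue until some $W\supseteq\Omega^{-3}N$. Since $\Omega^{-3}N\cong\Sigma N$ stably, and $\Sigma X$ is an injective hull of $\Sigma N$, one can replace $W$ by $\Sigma X$ up to projective summands. Adjusting the last map by an automorphism of $\Sigma X$ then forces $\delta_{\mathfrak s}=\theta_N$. The step that makes this work is that stable automorphisms of $\Sigma N$ lift to automorphisms of an injective hull. The sequence $X\xrightarrow{\mathrm{Id}}X\to0\to\Sigma X$ has $N=0$, so it is distinguished trivially.

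\emph{$\mathbf{TR2}$.} Rotating $\mathfrak s$ replaces $N$ by $SN=\operatorname{Ker}v$. The comparison for the rotated sequence is computed from $\delta_{\mathfrak s}$ by applying $S$ and using $\sigma_N$, with a sign coming from $-\Sigma u$. So $\delta_{\mathrm{rot}\,\mathfrak s}=-S(\delta_{\mathfrak s})\circ\sigma_N$. Condition \eqref{eq:triangle-functor-compatibility} is exactly the statement that this equals $\theta_{SN}$ if and only if $\delta_{\mathfrak s}=\theta_N$. The reverse rotation follows in the same way because $S$ is an equivalence.

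\emph{$\mathbf{TR3}$.} Given a commutative square $(a,b)$ between distinguished rows, $a$ induces a map $N\to N'$. Since the rows are injective resolutions, the comparison theorem extends $a,b$ to a chain map $(a,b,c,c')$. Here $c'$ is some map $\Sigma X\to\Sigma X'$, not necessarily $\Sigma a$. Naturality of $\theta$, together with $\delta=\theta$ on both rows, shows that $c'$ and $\Sigma a$ agree on $\Sigma N$ modulo projectives. Correcting $c$ by a homotopy then makes the last square commute on the nose. I expect this correction to be the main obstacle: it needs the $\Sigma$-periodic exactness and injectivity of the terms to absorb the difference $c'-\Sigma a$ into $c$.

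\emph{Uniqueness.} Given any pre-triangulated structure on $(\cF,\Sigma)$, each of its triangles is exact: $\Hom(P,-)$ is homological for projective $P$, and $A$ is a generator. For each $N$, complete an injective copresentation of $N$ to a distinguished triangle and read off a comparison $\theta_N$. $\mathbf{TR3}$ shows that $\theta_N$ is well defined and natural, and $\mathbf{TR2}$ gives the coherence condition. The two constructions are mutually inverse by $\mathbf{TR1}$ and the five lemma.
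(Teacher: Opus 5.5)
Your outline is correct, but it takes a different route from the paper. The paper does not verify $\mathbf{TR1}$--$\mathbf{TR3}$ or the bijection itself. Its proof is a reduction with one substantive step: because $A$ is a projective generator, evaluation at $A$ turns the exactness used in Heller's construction for $\mathcal P(A)$ into exactness of $A$-modules. Heller's Theorem~16.4 then gives the classification, and Geiss--Keller--Oppermann supply the formulation by exact $3$-$\Sigma$-sequences and complete comparisons.

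You instead rederive the classification inside $\modu A$. The paper's one observation reappears in your uniqueness step, where $\Hom(A,-)$ being homological forces every triangle of an arbitrary pre-triangulation to be module-exact. Your route is self-contained and shows where self-injectivity enters. The price is several facts you assert rather than prove:
\begin{itemize}
\item the rotation formula $\delta_{\mathrm{rot}\,\mathfrak s}=-S(\delta_{\mathfrak s})\circ\sigma_N$, which the paper also takes from Heller;
\item the lifting of a stable automorphism of $\Sigma N$ to a module automorphism of $\Sigma N$. This is true after splitting off projective summands, because maps factoring through projectives between modules without projective summands are radical.
\end{itemize}

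Two small repairs are needed. First, $\Sigma X$ need not be an injective hull of $\Sigma N$: if $u$ is a split monomorphism then $N=0$. You never need this. It suffices to add a projective summand to $Z$ so that $\operatorname{Coker}v\cong\Sigma N$ as modules, and then twist $w$ by an automorphism of $\Sigma N$.

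Second, the $\mathbf{TR3}$ correction you flag as the main obstacle is short. Put $\alpha=a|_N$, let $\bar w\colon Z\twoheadrightarrow\Sigma N$ and $\bar w'\colon Z'\twoheadrightarrow\Sigma N'$ be the corestrictions of $w$ and $w'$, and let $\gamma$ be the map $\Sigma N\to\Sigma N'$ induced by $c$. The difference $\Sigma\alpha-\gamma$ factors through a projective, so it lifts along the epimorphism $\bar w'$: $\Sigma\alpha-\gamma=\bar w'\tilde h$ for some $\tilde h\colon\Sigma N\to Z'$. Then $c+\tilde h\bar w$ still satisfies $(c+\tilde h\bar w)v=v'b$, since $\bar w v=0$, and it makes the last square commute on the nose.
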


\begin{proof}
Because $A$ is a projective generator, evaluation at $A$ identifies the
exactness occurring in Heller's construction for $\mathcal P(A)$ with
exactness of $A$-modules. Then we could apply
\cite[Theorem~16.4]{Heller1968} to the present setting. The
formulation by exact $3$-$\Sigma$-sequences and complete comparisons is the
case $n=3$ of
\cite[Lemma~2.3 and Proposition~2.4]{GeissKellerOppermann2013}. The
compatibility with $\Omega^{-1}$ is exactly the condition that $\theta$ be
an isomorphism of triangle functors. In characteristic two the displayed
minus sign equals the identity, but the triangle-functor condition remains
part of the hypothesis.
\end{proof}

Applying $\Omega^3$ to $\theta_N$ and using the standard isomorphism
$\Omega^3\circ\Omega^{-3}\cong
\mathrm{Id}_{\ol{\modu}A}$, we get
\[
  \lambda_N=\Omega^3(\theta_N):\Omega^3(\Sigma N)\longrightarrow N.
\]
 For an exact candidate $3$-$\Sigma$-sequence
$\mathfrak s$, put
$\lambda_{\mathfrak s}=\Omega^3(\delta_{\mathfrak s})$.
Then $\mathfrak s$ is in $\triangle_\theta$ precisely when
$\lambda_{\mathfrak s}=\lambda_N$ in the stable category. 
\section{A pre-triangulated category which is not triangulated}
\label{sec:example}
The main result of this section is the proof of \Cref{thm:main} from the introduction, which is presented at the end of this section.
\subsection{The preprojective algebra \texorpdfstring{$\Pi(A_5)$}{Pi(A5)}}
\label{sec:algebra}

Let $\overline{A_5}$ be the following
doubled quiver of $A_5$: 
\[
\xymatrix@C=38pt{
1\ar@/^/[r]^{b_1}&
2\ar@/^/[l]^{a_1}\ar@/^/[r]^{b_2}&
3\ar@/^/[l]^{a_2}\ar@/^/[r]^{b_3}&
4\ar@/^/[l]^{a_3}\ar@/^/[r]^{b_4}&
5\ar@/^/[l]^{a_4}
}
\]
Let $\Lambda=\Pi(A_5)$ denote the preprojective algebra of type $A_5$ over $\mathbb F_2$, namely the quotient of the path algebra $\mathbb F_2 \overline{A_5}$ by the following relations:
\begin{equation}\label{eq:preproj-relations}
 a_1\circ b_1,\qquad
 b_i\circ a_i+a_{i+1}\circ b_{i+1}\ (1\le i\le3),\qquad
 b_4\circ a_4.
\end{equation}
See details in \cite[p.~158]{GelfandPonomarev1979}. Denote by $\modu \Lambda$ the category of finite-dimensional right $\Lambda$-modules. For $i\in\{1,2,3,4,5\}$, write $P_i=e_i\Lam$ for the corresponding indecomposable projective right $\Lam$-module.

Define the graph-reflection automorphism $\nu\in \operatorname{Aut}_{\kk}(\Lam)$
\begin{equation}\label{eq:nu}
 \nu(e_i)=e_{6-i},\qquad
 \nu(a_i)=b_{5-i},\qquad
 \nu(b_i)=a_{5-i}.
\end{equation}
It satisfies $\nu^2=\mathrm{Id}_\Lam$. For a right $\Lam$-module $N$, we write $N_\nu$ for the right module with twisted action
$n\mathbin{\cdot_\nu}a=n\cdot\nu(a)$. Indeed, if $N$ has dimension vector
\[
 \mathbf{dim}\,N=(t_1,t_2,t_3,t_4,t_5)
\]
and arrow matrices $N(a_i)$ and $N(b_i)$, $i=1,2,3,4$, then $N_\nu$ has dimension vector
\[
 \mathbf{dim}\,N=(t_5,t_4,t_3,t_2,t_1)
\]
and arrow matrices $N_\nu(a_i)=N(b_{5-i})$ and $N_\nu(b_i)=N(a_{5-i})$, $i=1,2,3,4$. We use the enveloping algebra
\[
 \Lam^e=\Lam^{\mathrm{op}}\otimes_{\kk}\Lam,
\]
and regard a $\Lam$-bimodule as a right $\Lam^e$-module via
$m\cdot(a^{\mathrm{op}}\otimes_{\kk} b)=a\circ m\circ b$.


\begin{proposition}\label{prop:periodicity}
The algebra $\Lam$ is a $35$-dimensional self-injective algebra with Nakayama
automorphism $(-)_\nu$. There is an exact sequence of $\Lam$-bimodules
\begin{equation}\label{eq:bimodule-resolution}
0\longrightarrow\Lam_\nu\xlongrightarrow{j}Q_2
\xlongrightarrow{\partial_2}Q_1\xlongrightarrow{\partial_1}Q_0
\xlongrightarrow{\mu}\Lam\longrightarrow0,
\end{equation}
where 
\[
 Q_0=Q_2=\bigoplus_{i=1}^5\Lam e_i\otimes_{\kk} e_i\Lam
~~\text{ and }~~
 Q_1=\bigoplus_{x\in T}\Lam e_{t(x)}
 \otimes_{\kk} e_{s(x)}\Lam;
\]
here $T=\{a_i,b_i\mid 1\leq i\leq 4\}$. 
In particular,
\[
 \Omega^3_{\Lam^e}(\Lam)\cong\Lam_\nu
 \quad\text{and}\quad
 \Omega^3\cong(-)_\nu\cong\Omega^{-3}
 \quad\text{on }\ol{\modu}\Lam.
\]
The Auslander--Reiten translation $\tau$ equals to $\Omega^{-1}$. 
\end{proposition}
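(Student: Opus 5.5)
The plan has four parts: the basic facts about $\Lam$, the explicit bimodule resolution, the identification of $\Omega^{\pm3}$, and finally $\tau$.

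\textbf{Basic facts.} For the preprojective algebra of a Dynkin quiver of type $A_n$, the indecomposable projective $P_i=e_i\Lam$ has a well-known basis of paths. Its dimension vector is the "hook" centred at $i$, with $\dim e_i\Lam e_j=\min(i,j,6-i,6-j)$ for $n=5$. Summing over $i,j$ gives $1+2+3+2+1$ on the diagonal plus the off-diagonal entries, for a total of $35$. We will verify this by listing the maximal paths.

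Self-injectivity and the Nakayama automorphism we take from the classical theory (Gelfand--Ponomarev, Brenner--Butler--King). Each $P_i$ has simple socle $S_{6-i}$, so the Nakayama permutation is $i\mapsto 6-i$. To identify the Nakayama automorphism exactly as $\nu$, and not just up to inner automorphism and signs, we will use a nondegenerate form. Let $\phi$ be the linear functional picking out the coefficient of the longest paths. We then check $\phi(xy)=\phi(y\nu(x))$ on arrows. Since $\operatorname{char}k=2$, the sign twists that appear over other fields disappear, and $\nu$ itself works.

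\textbf{The bimodule resolution.} We write down \eqref{eq:bimodule-resolution} explicitly.
\begin{itemize}
\item $\mu$ is multiplication.
\item $\partial_1(e_{t(x)}\otimes e_{s(x)})=x\otimes e_{s(x)}-e_{t(x)}\otimes x$.
\item $\partial_2$ sends the generator at vertex $i$ to the sum over arrows $x,y$ in the mesh relation at $i$ of $x\otimes y'$-type terms, namely the standard "derivative" of the relation $\rho_i$.
\item $j$ sends $1\in\Lam_\nu$ to $\sum_i\sum_{p} p^{*}\otimes p$, the Casimir-type element. Here $p$ runs over a basis of $e_i\Lam$ and $p^{*}$ over the dual basis with respect to $\phi$.
\end{itemize}
Exactness at $Q_0$, $Q_1$ and $Q_2$ is the standard fact that preprojective algebras of Dynkin type are Koszul-like up to degree $2$. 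That is, the first three terms form the beginning of the minimal resolution determined by generators (arrows) and relations (the mesh relations). This is the usual Butler--King/Schofield complex.

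For the tail we compute the kernel of $\partial_2$. By a dimension count, $\dim Q_2-\dim Q_1+\dim Q_0-\dim\Lam=\dim\Lam$. Using $\dim Q_0=\sum_i\dim\Lam e_i\cdot\dim e_i\Lam$, and similarly for $Q_1$, we show that $\ker\partial_2$ is $35$-dimensional. We then check that $j$ is an injective bimodule map with $\partial_2\circ j=0$; the required twist by $\nu$ comes from the Nakayama property of $\phi$. Injectivity and the dimension count give exactness.

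\textbf{Consequences for $\Omega$.} Since all the $Q_i$ are projective bimodules, we get $\Omega^3_{\Lam^e}(\Lam)\cong\Lam_\nu$. Tensoring the sequence with $N\otimes_\Lam-$ preserves exactness, because the sequence splits as a sequence of one-sided modules. It also produces projective right modules. This gives $\Omega^3N\cong N\otimes_\Lam\Lam_\nu\cong N_\nu$, naturally in the stable category. Because $\nu^2=\mathrm{Id}$, we have $(-)_\nu$ as its own inverse, and so $\Omega^{-3}\cong(-)_\nu$ as well.

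\textbf{The Auslander--Reiten translation.} For a self-injective algebra, $\tau\cong\Omega^2\circ\mathcal N$, where $\mathcal N=-\otimes_\Lam D\Lam\cong(-)_\nu$ is the Nakayama functor. Therefore $\tau\cong\Omega^2(-)_\nu\cong\Omega^2\Omega^{-3}=\Omega^{-1}$.

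\textbf{Main obstacle.} The delicate step is the exactness at $Q_2$, together with the precise twist $\nu$ (as opposed to an automorphism differing by signs or an inner automorphism). For this we will rely on Brenner--Butler--King's periodicity theorem for Dynkin preprojective algebras. Over $\kk$ the sign ambiguities vanish, which leaves only the explicit check of $\partial_2\circ j=0$ on generators.
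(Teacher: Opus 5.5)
Your proposal is correct and follows the paper's route. Brenner--Butler--King supplies the Frobenius structure, the Nakayama automorphism and the bimodule resolution. Tensoring that sequence, which splits as a sequence of one-sided modules, with a module gives $\Omega^{\pm3}\cong(-)_\nu$, and the self-injective formula $\tau\cong\Omega^2\circ(-)_\nu$ from Auslander--Reiten--Smal\o{} then yields $\tau\cong\Omega^{-1}$. The paper tensors with $N_\nu$ to read off $\Omega^{-3}N\cong N_\nu$ directly, which is equivalent to your inversion of $\Omega^3\cong(-)_\nu$ using $\nu^2=\mathrm{Id}$. Your dimension count is a useful supplement the paper omits: given exactness of $Q_2\to Q_1\to Q_0\to\Lambda\to0$, one has $\dim\ker\partial_2=259-448+259-35=35$, so an injective $j$ with $\partial_2 j=0$ settles exactness at $Q_2$. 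That last step, not the standard Butler--King part, is what ``exactness at $Q_2$'' means, so your early sentence calling it standard slightly overstates things.
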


\begin{proof}
The Frobenius structure and the bimodule resolution of $\Lambda$ can be deduced by 
\cite[Theorems~4.8 and 4.9]{BrennerButlerKing2002}. It follows that $\Omega^3_{\Lam^e}(\Lam)\cong\Lam_\nu$. 

\vskip5pt

Let $N\in \modu \Lambda$. Note that the sequence \eqref{eq:bimodule-resolution} splits as an exact sequence of left $\Lambda$-modules. Thus, applying $N_\nu\otimes_\Lam-$ to \eqref{eq:bimodule-resolution} gives an exact sequence of right $\Lambda$-modules

\begin{equation}\label{eq:tensor-sequence}
0\to N\xrightarrow{J_N}P_N^0\xrightarrow{D_N^0}P_N^1
\xrightarrow{D_N^1}P_N^2\xrightarrow{Q_N}\Sigma N\to0, 
\end{equation}
where all $P_N^0, P_N^1$ and $P_N^2$ are projective-injective. This shows that $\Omega^3\cong(-)_\nu\cong\Omega^{-3}$ on $\ol{\modu}\Lam$. Finally, by \cite[Chapter~IV, Proposition~3.7] {AuslanderReitenSmalo1995}, $\tau\cong\Omega^2\circ(-)_\nu\cong\Omega^5\cong\Omega^{-1}$.
\end{proof}

Put $R=\kk[t]/(t^6)$, $V_i=R/(t^i)$, and $G=\bigoplus_{i=1}^5V_i$. A classical result due to Dlab and Ringel show the relationship between $\Lambda$ and $R$. 

\begin{proposition}\label{LambdaR}\cite[\S7, pp.~220--221]{DlabRingel1992}
There is a ring isomorphism
\[
 \Lam\cong\ol{\End}_R(G).
\]
The isomorphism is given by sending $a_i$ to the natural quotient $V_{i+1}\longrightarrow V_i$ and $b_i$ to the natural embedding $V_i\longrightarrow V_{i+1}$. 
\end{proposition}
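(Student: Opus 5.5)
The plan is to write down the algebra map explicitly, check that it respects the relations \eqref{eq:preproj-relations} and is surjective, and then conclude by comparing dimensions with \Cref{prop:periodicity}. The key observation is that $V_6=R$ is the only indecomposable projective $R$-module. Passing to $\ol{\End}_R(G)$ therefore kills exactly the morphisms that factor through a sum of copies of $R$.

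\textbf{Step 1: a homomorphism from the path algebra.} Define an algebra map from the path algebra $\kk\overline{A_5}$ to $\ol{\End}_R(G)$ by three assignments:
\begin{itemize}
\item $e_i$ goes to the class of the idempotent projecting $G$ onto $V_i$;
\item $a_i$ goes to the natural quotient $V_{i+1}\to V_i$, $1\mapsto 1$;
\item $b_i$ goes to the natural embedding $V_i\to V_{i+1}$, $1\mapsto t$.
\end{itemize}
Composition is taken in the same order as in \eqref{eq:preproj-relations}. If the right-module convention forces an opposite algebra, compose with the anti-automorphism of $\Lam$ that fixes each $e_i$ and interchanges $a_i$ and $b_i$; it preserves the relations.

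\textbf{Step 2: the relations hold stably.} There are three cases.
\begin{itemize}
\item For $a_1\circ b_1$: the composite $V_1\to V_2\to V_1$ sends $1\mapsto t\mapsto 0$, so it is already zero.
\item For $1\le i\le 3$: both $b_i\circ a_i$ and $a_{i+1}\circ b_{i+1}$ equal multiplication by $t$ on $V_{i+1}$. Their sum is $2t=0$, since we are over $\kk$.
\item For $b_4\circ a_4$: this is multiplication by $t$ on $V_5$, which is nonzero in $\End_R(V_5)$. However, it factors as $V_5\to R$, $1\mapsto t$ (well defined because $t^5\cdot t=0$), followed by the projection $R\to V_5$. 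Hence it vanishes in the stable category.
\end{itemize}
So the map descends to $\Lam\to\ol{\End}_R(G)$.

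\textbf{Step 3: surjectivity.} The space $\Hom_R(V_i,V_j)$ is spanned by the maps $1\mapsto t^m$ with $\max(0,j-i)\le m<j$. Each such map is a composite of $m$ embeddings and quotients, namely suitable words in the $b$'s and $a$'s. Concretely, one first embeds upward or projects downward to reach $V_j$, and then multiplies by powers of $t$ realized as $b\circ a$. Since every stable morphism is the class of such a map, the map of Step 2 is surjective.

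\textbf{Step 4: dimension count, the main bookkeeping step.} We have $\dim\Hom_R(V_i,V_j)=\min(i,j)$. Every map $V_i\to R$ has image in $t^{6-i}R$, so the maps $V_i\to V_j$ factoring through $R$ are exactly those sending $1$ into $t^{6-i}V_j$. That subspace has dimension $\max(0,i+j-6)$. The stable dimensions then add up as follows:
\begin{itemize}
\item summing $\min(i,j)$ over $1\le i,j\le 5$ gives $55$;
\item summing $\max(0,i+j-6)$ gives $4\cdot1+3\cdot2+2\cdot3+1\cdot4=20$.
\end{itemize}
Hence $\dim\ol{\End}_R(G)=35$. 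This equals $\dim\Lam$ by \Cref{prop:periodicity}, so the surjection of Step 3 is an isomorphism.

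The only delicate points are the stable vanishing of $b_4\circ a_4$ and the correct description of $\mathcal P(V_i,V_j)$; the rest is routine.
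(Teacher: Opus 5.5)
Your proof is correct. The relation checks are right: characteristic two handles the middle relations, and $b_4\circ a_4$ is multiplication by $t$ on $V_5$, which factors as $V_5\to R\to V_5$. The words in the $a$'s and $b$'s realize every map $1\mapsto t^m$, and the counts $\sum_{i,j}\min(i,j)=55$ and $\sum_{i,j}\max(0,i+j-6)=20$ are accurate.

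The paper gives no argument of its own for this statement; it simply cites Dlab--Ringel. The standard reason behind that reference is structural:
\begin{itemize}
\item $\End_R(V_1\oplus\cdots\oplus V_6)$ is the Auslander algebra of $R$.
\item It is presented by the doubled $A_6$ quiver with the mesh relations of the almost split sequences $0\to V_i\to V_{i-1}\oplus V_{i+1}\to V_i\to 0$, where $V_0=0$.
\item Passing to $\ol{\End}_R(G)$ amounts to factoring out the idempotent at the projective vertex $V_6$. This turns the mesh relation at vertex $5$ into $b_4\circ a_4=0$.
\end{itemize}

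Your route replaces that presentation theorem, which is really an injectivity statement, by an explicit computation plus one external input: $\dim_{\kk}\Lam=35$ from \Cref{prop:periodicity}. This is not circular, because the paper takes that dimension from Brenner--Butler--King. It does mean the nontrivial content has moved into knowing $\dim_{\kk}\Lam$. The structural route needs no such prior knowledge and in fact computes the dimension; yours is more elementary and fully checkable by hand. If you want your argument independent of \Cref{prop:periodicity}, it suffices to exhibit $35$ paths spanning $\Lam$, since your surjectivity already gives $\dim_{\kk}\Lam\ge 35$.

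One small remark: your hedge about an opposite algebra is unnecessary. Since $a_1\circ b_1$ is a single relation sitting at vertex $1$, $\circ$ must be composition of maps, so your assignment is already a homomorphism. Composing with the anti-automorphism would in any case send $a_i$ to the embedding rather than the quotient. It would therefore not give the isomorphism described in the statement.
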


\begin{remark}\label{conv:transported-triangulation}
Note that $\modu R$ is a Frobenius category, and $\ol{\modu}R$ is triangulated by \cite[Chapter~I, Theorem~2.6]{Happel1988}. In particular, the isomorphism
\[
 \varphi:\ol{\modu}R\xlongrightarrow{\sim}\mathcal P(\Lam).
\]
give a triangulated structure on $\mathcal F=P(\Lam)$. The corresponding suspension functor $\Sigma$ is just $(-)_\nu$. Denote the corresponding triangulation on $\cF$ by $\triangle_0$. By \Cref{thm:heller}, $\triangle_0$ induces a Heller comparison
\[
 \theta_0:\Sigma\xlongrightarrow{\sim}\Omega^{-3}.
\]
For $N\in\modu\Lam$, write
$(\lambda_0)_N=\Omega^3((\theta_0)_N)$.
\end{remark}

\subsection{A twist of comparison}\label{sec:parameter}


For the definition and properties of Auslander--Reiten triangles, we refer readers to \cite[Chapter I. 4]{Happel1988}.

\begin{lemma}\label{lem:orbit-twist}
Let $\mathcal C$ be a Hom-finite Krull--Schmidt triangulated
$\kk$-category with Auslander--Reiten triangles and suspension $S$.
Let $\cO$ be a complete $S$-orbit of isomorphism classes of indecomposable
objects. Assume that for every representative $U\in\cO$, 
\[
 \End_{\mathcal C}(U)/\operatorname{rad}\End_{\mathcal C}(U)\cong\kk, 
\]
and suppose that there is a nonzero
$\Delta_U\in\operatorname{rad}\End_{\mathcal C}(U)$ such that
\begin{enumerate}
 \item $\Delta_U\circ f=0$ for every morphism
 $f:Y\longrightarrow U$ which is not a retraction;
 \item $g\circ\Delta_U=0$ for every morphism
 $g:U\longrightarrow Y$ which is not a section;
 \item $\Delta_{SU}=S(\Delta_U)$.
\end{enumerate}
For each $W\in\mathcal C$, we write $W=(\oplus_{i=1}^s W_1^i)\oplus (\oplus_{j=1}^tW_2^j)$ such that each $W_1^i$ and $W_2^j$ is indecomposable and $W_1^i\in\cO$ for each $1\leq i\leq s$, and  $W_2^j\notin \cO$ for each $1\leq j\leq t$, then we define $\Delta_W=(\oplus_{i=1}^s \Delta_{W_1^i})\oplus (\oplus_{j=1}^t 0_{W_2^j})$.
Then $\Delta_{\cO}$ is a natural transformation of the identity,
commutes with $S$, and satisfies $\Delta_{\cO}^2=0$. In particular, $\eps_{W}=\mathrm{Id}_W+\Delta_{W}$ is a unit which commutes with $S$ and
$\eps_{W}^{-1}=\eps_{W}$ for every $W\in\mathcal C$.
\end{lemma}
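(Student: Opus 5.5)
The plan is to verify naturality of $\Delta_{\cO}$ directly on morphisms between indecomposable objects. Since $\mathcal C$ is Krull--Schmidt and $\Delta_W$ is defined diagonally with respect to a chosen decomposition, naturality for arbitrary $W$ reduces to checking $h\circ\Delta_U=\Delta_V\circ h$ for every morphism $h:U\to V$ between indecomposables. The same reduction shows $\Delta_W$ does not depend on the chosen decomposition, since any two decompositions differ by an automorphism, which is a matrix of morphisms between indecomposables. Here $\Delta_U$ is understood to be $0$ when $U\notin\cO$.

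For the case analysis, first suppose both $U,V\notin\cO$; then both sides vanish. Next suppose $U\in\cO$ and $V\notin\cO$. Then $h$ is not a section, since a section between indecomposables is an isomorphism and $\cO$ is closed under isomorphism. By (2), $h\circ\Delta_U=0=\Delta_V\circ h$. Symmetrically, if $U\notin\cO$ and $V\in\cO$, then $h$ is not a retraction, and (1) gives $\Delta_V\circ h=0=h\circ\Delta_U$. Finally suppose $U,V\in\cO$. If $h$ is not an isomorphism, it is neither a section nor a retraction, so both composites vanish by (1) and (2). If $h$ is an isomorphism, I will argue as follows. Since $\End(U)/\operatorname{rad}\End(U)\cong\kk$, write $h^{-1}\Delta_V h=\Delta'$. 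It suffices to know that $\Delta_U$ is central in $\End(U)$ and that the hypotheses pin it down up to this conjugation.

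This last step is the delicate one. I expect that (1) and (2) characterise $\Delta_U$ as the unique nonzero element of the socle-type ideal $\{x : x\operatorname{rad}=0=\operatorname{rad}x\}$. Conditions (1) and (2) imply $\Delta_U\circ r=0=r\circ\Delta_U$ for all $r\in\operatorname{rad}\End(U)$, because radical endomorphisms are neither sections nor retractions. Every endomorphism has the form $c\,\mathrm{Id}+r$ with $c\in\kk$, so $\Delta_U$ commutes with all of $\End(U)$; in particular $\Delta_U^2=0$, since $\Delta_U$ itself lies in the radical. Via the Auslander--Reiten triangle ending at $U$, the morphisms $U\to U$ killing all non-retractions form a one-dimensional space spanned by the composite $U\to S\tau U\to\cdots$. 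Hence $\Delta'$, which satisfies the same annihilation properties, equals $\Delta_U$ because it is nonzero and $k=\kk$. This gives $h\Delta_U=\Delta_Vh$ for isomorphisms $h$, completing naturality. The main obstacle is this uniqueness over $\kk$, which I would derive from the AR-triangle description of $\operatorname{soc}$ of $\Hom(U,-)$.

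Compatibility with $S$ then follows from (3) together with additivity of $S$, since $S$ preserves $\cO$ and its complement. Moreover $\Delta_W^2=\bigoplus\Delta_{W_1^i}^2=0$. Therefore $\eps_W^2=\mathrm{Id}+2\Delta_W+\Delta_W^2=\mathrm{Id}$ in characteristic two. So $\eps_W$ is a natural unit with $\eps_W^{-1}=\eps_W$, and it commutes with $S$.
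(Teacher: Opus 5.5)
Your proof is correct, and its core matches the paper's. Hypotheses (1) and (2) give naturality between non-isomorphic indecomposables. Writing $h=c\,\mathrm{Id}_U+r$ and using $\Delta_U r=0=r\Delta_U$ gives centrality of $\Delta_U$ in $\End(U)$ and $\Delta_U^2=0$. Krull--Schmidt, hypothesis (3) and characteristic two then finish the argument.

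The genuine difference is how you treat isomorphisms $h:U\to V$ between distinct objects of the same isomorphism class. The paper checks only endomorphisms $h:U\to U$. This tacitly works with one object per class: transporting $\Delta$ along isomorphisms is then well defined exactly because $\Delta_U$ is central. The paper also never uses the Auslander--Reiten hypothesis.

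You instead allow independent choices of $\Delta_U$ on isomorphic objects and show that they are forced. Let $x\in\End(U)$ kill all non-retractions. Then $xg=0$ for the right almost split map $g:E\to U$, so $x=tw$ with $w:U\to S\tau U$ the third map of the AR triangle. Since $tw=0$ whenever $t$ is not a section, $tw$ depends only on the class of $t$ in $\Hom(S\tau U,U)/\operatorname{rad}(S\tau U,U)$, a space of dimension at most one. Over $\kk$ this makes $\Delta_U$ the unique nonzero such element, whence $h^{-1}\Delta_V h=\Delta_U$. Your phrase ``$\operatorname{soc}$ of $\Hom(U,-)$'' is loose; the computation just given is the one you need.

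The price of your route is the AR-triangle input, which the statement supplies anyway. The gain is that independence of $\Delta_W$ from the chosen decomposition is fully justified rather than tacit. Moreover, hypothesis (3) becomes redundant, since $S(\Delta_U)$ satisfies the same uniqueness characterisation on $SU$.
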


\begin{proof}
The hypotheses (1)(2) give naturality for morphisms between
nonisomorphic indecomposable objects. Thus, it remains to show that for each object $U\in \cO$ and each morphism $h\colon U\to U$, $\Delta_U\circ h=h\circ \Delta_U$.  Fix an object $U\in\cO$ and a morphism $h\in\End_{\mathcal C}(U)$. By the assumption that $\End_{\mathcal C}(U)/\operatorname{rad}\End_{\mathcal C}(U)\cong\kk$
 , there exists $c\in\mathbb F_2$ and $r\in\operatorname{rad}\End_{\mathcal C}(U)$ such that 
$
 h=c\,\mathrm{Id}_U+r.
$
Combining this with the assumptions (1)(2), we have
$\Delta_U\circ r=0=r\circ\Delta_U$, and hence
$$
\Delta_U\circ h=c\Delta_U=h\circ\Delta_U. $$
This proves the naturality on an
indecomposable object. Combining with the
Hom-finite Krull--Schmidt property of $\mathcal C$, we conclude that $\Delta_{\cO}$ is natural.

Because $\Delta_U\in \operatorname{rad}\End_{\mathcal C}(U)$ for each $U\in\cO$, it is neither a section nor a retraction, so
the hypotheses (1)(2) give $\Delta_U^2=0$. It follows that $\Delta_W^2=0$ for each $W\in\mathcal C$.  Combining with the definition of $\Delta_{\cO}$,  the action of $\Delta_{\cO}$ on the $S$-orbit in (3) yields that $\Delta_{\cO}$ commutes with
$S$. Finally,  note that
$\eps_{W}^2=\mathrm{Id}_W$ over $\kk$, we get $\eps_{W}^{-1}=\eps_{W}$.
\end{proof}

Next, the following module $M$ plays a key role in this article. 

\begin{definition}\label{def:module-M}
Let $M\in \modu\Lambda$ have dimension vector
\[
 \mathbf{dim}\,M=(2,2,2,1,0)
\]
and arrow matrices $x_i=M(a_i)$, $y_i=M(b_i)$
\[
\begin{aligned}
x_1&=\begin{pmatrix}0&1\\0&1\end{pmatrix},&
y_1&=\begin{pmatrix}1&1\\0&0\end{pmatrix},&
x_2&=\begin{pmatrix}1&0\\0&0\end{pmatrix},\\
y_2&=\begin{pmatrix}0&1\\0&0\end{pmatrix},&
x_3&=\begin{pmatrix}0&1\end{pmatrix},&
y_3&=\begin{pmatrix}1\\0\end{pmatrix},
\end{aligned}
\]
\end{definition}

Here are some properties of $M$. 

\begin{proposition}\label{prop:M}
The module $M$ is indecomposable, $\Omega(M)\cong M_\nu$ and $\Omega^2(M)\cong M$. Furthermore, 
\[
 \ol{\End}_\Lam(M) \cong\kk[\rho]/(\rho^2),
\]
\[
 \ol{\Hom}_\Lam(M,\Omega(M))=0
 =\ol{\Hom}_\Lam(\Omega(M),M).
\]
\end{proposition}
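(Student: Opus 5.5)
We propose to verify \Cref{prop:M} by direct linear algebra over $\kk$, computing a minimal projective presentation of $M$ explicitly. First we check that the matrices in \Cref{def:module-M} satisfy the relations \eqref{eq:preproj-relations}: $x_1y_1=0$, $y_1x_1+x_2y_2=0$, $y_2x_2+x_3y_3=0$ and $y_3x_3=0$ (with the right-module convention for composition). For indecomposability we compute $\End_\Lam(M)$ as the space of tuples $(\phi_1,\dots,\phi_4)$ of matrices commuting with all $x_i,y_i$. This is a small linear system, and we expect its solution space to be a local algebra; no nontrivial idempotent then means $M$ is indecomposable. Alternatively, we could use the Dlab--Ringel description of \Cref{LambdaR} to identify $M$ with an $R$-module picture, but the direct computation is shorter.

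Next we compute the top $M/\operatorname{rad}M$ to obtain the projective cover $P\to M$, and then its kernel $\Omega(M)$. The dimension vectors of the $P_i$ follow from $\Lam$ being $35$-dimensional, with $\dim e_i\Lam=(\min(i,j)\cdot\text{ appropriate})$; concretely $P_i$ has dimension vector given by the $i$-th row of the matrix $(\min(i,j,6-i,6-j))$ pattern. We write down $\Omega(M)$ by explicit basis and arrow matrices, then exhibit an isomorphism $\Omega(M)\cong M_\nu$ by matching dimension vectors (which should be $(0,1,2,2,2)$) and finding a change of basis. Applying $\nu$, which is an exact autoequivalence commuting with $\Omega$, gives $\Omega^2(M)\cong \Omega(M_\nu)\cong(\Omega M)_\nu\cong M_{\nu^2}=M$. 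This is consistent with $\Omega^3\cong(-)_\nu$ from \Cref{prop:periodicity}.

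For the stable Hom computations we use the Auslander--Reiten formula together with $\tau\cong\Omega^{-1}$ from \Cref{prop:periodicity}. The endomorphism algebra from the first step already has radical spanned by a single nilpotent $\rho$ with $\rho^2=0$. We must check that $\rho$ does not factor through a projective. To do so we compute $\mathcal P(M,M)$ as the image of $\Hom(M,P)\times\Hom(P,M)$ with $P$ the projective cover, or equivalently (since $\Lam$ is self-injective) check whether $\rho$ factors through the injective envelope $M\to I(M)$. We expect $\mathcal P(M,M)=0$, giving $\ol{\End}_\Lam(M)\cong\kk[\rho]/(\rho^2)$.

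Finally, $\ol{\Hom}(M,\Omega M)\cong \ol{\Hom}(M,M_\nu)$. We compute $\Hom_\Lam(M,M_\nu)$ directly. The supports $(2,2,2,1,0)$ and $(0,1,2,2,2)$ overlap only in vertices $2,3,4$, so the linear system is small, and we show that every solution factors through a projective (or that the space is zero). For the other vanishing, $\ol{\Hom}(\Omega M,M)\cong\ol{\Hom}(M,\Omega^{-1}M)=\ol{\Hom}(M,\Omega M)$ using $\Omega^2M\cong M$, so it follows from the first. We expect the main obstacle to be the factorization-through-projectives bookkeeping in this last step and in confirming $\rho\notin\mathcal P(M,M)$. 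Here we would use the criterion that $f\colon X\to Y$ factors through a projective if and only if it factors through the projective cover $P(Y)\to Y$, which reduces each check to a finite linear problem.
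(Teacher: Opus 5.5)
Your computational route to indecomposability and to $\Omega M\cong M_\nu$, $\Omega^2M\cong M$ agrees with the paper. The step meant to give $\ol{\End}_\Lam(M)\cong\kk[\rho]/(\rho^2)$, however, rests on two predictions that are false.

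First, $\End_\Lam(M)$ does not have a one-dimensional radical. Since $\operatorname{top}M\cong S_1\oplus S_3\cong\operatorname{soc}M$, there are two radical endomorphisms: $u\colon M\to S_1\to M$ and $v\colon M\to S_3\to M$, each a top projection followed by a socle inclusion. In fact $\End_\Lam(M)=\kk\,\mathrm{Id}\oplus\kk u\oplus\kk v\cong\kk[u,v]/(u^2,uv,v^2)$ is three-dimensional; $u,v$ are the parameters $s,t$ in the paper.

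Second, $\mathcal P(M,M)\neq0$. Because $\operatorname{soc}P_1=S_5$ and $M_5=0$, only the summand $P_3$ of the projective cover matters. The space $\Hom_\Lam(M,P_3)$ has dimension $\dim M_3=2$, since $P_3$ is the injective envelope of $S_3$. One map is $M\to S_3\cong\operatorname{soc}P_3$, which dies under $P_3\to M$. The other sends the top generator at vertex $3$ to the length-two cycle at vertex $3$, and the top generator at vertex $1$ to the nonzero element of $(P_3)_1$; its composite with $P_3\to M$ is $u+v$. Thus $\mathcal P(M,M)=\kk(u+v)$. Had your expectation $\mathcal P(M,M)=0$ been correct, $\ol{\End}_\Lam(M)$ would be three-dimensional and the proposition false. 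So testing whether a single $\rho$ factors through a projective is not the right check.

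What you need to show is that $\mathcal P(M,M)$ is exactly one-dimensional. Since it lies in the radical, $\ol{\End}_\Lam(M)$ is then a two-dimensional local algebra, hence $\kk[\rho]/(\rho^2)$ with $\rho=\bar u=\bar v$.

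The paper gets this without any factorization bookkeeping. It applies $\Hom_\Lam(-,N)$ to $0\to M_\nu\to P_1\oplus P_3\to M\to0$ and uses $\operatorname{Ext}^1_\Lam(M,N)\cong\ol{\Hom}_\Lam(\Omega M,N)$. With $\dim\Hom_\Lam(M,M_\nu)=\dim\Hom_\Lam(M_\nu,M)=1$, this gives $\dim\ol{\Hom}_\Lam(\Omega M,M)=3-4+1=0$ and $\dim\ol{\End}_\Lam(\Omega M)=3-2+1=2$.

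That last fact also matters for your final step. $\Hom_\Lam(M,M_\nu)$ is not zero: it is spanned by $M\to S_3\to M_\nu$, from the top of $M$ to the socle of $M_\nu$ at vertex $3$. So the factorization through a projective (here through $P_3$) genuinely has to be verified. Your reduction of $\ol{\Hom}_\Lam(\Omega M,M)$ to $\ol{\Hom}_\Lam(M,\Omega M)$ via $\Omega^2M\cong M$ is fine.
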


\begin{proof}
A direct computation shows that there is an exact sequence
\begin{equation}\label{PM}
     0\longrightarrow M_\nu
 \xlongrightarrow{\kappa}P_1\oplus P_3
 \xlongrightarrow{\pi}M
 \longrightarrow0.
\end{equation}

The components of $\kappa$ are
\[
 \kappa_1=0,\quad
 \kappa_2=
 \begin{pmatrix}
 0\\0\\1
 \end{pmatrix},\quad
 \kappa_3=
 \begin{pmatrix}
 0&1\\
 0&0\\
 0&1\\
 1&0
 \end{pmatrix},\quad
 \kappa_4=
 \begin{pmatrix}
 1&1\\
 0&0\\
 1&0
 \end{pmatrix},\quad
 \kappa_5=
 \begin{pmatrix}
 1&0\\
 0&1
 \end{pmatrix}.
\]
The components of the projective cover $\pi$ are
\[
 \pi_1=
 \begin{pmatrix}
 0&1\\
 1&0
 \end{pmatrix},\quad
 \pi_2=
 \begin{pmatrix}
 1&1&0\\
 1&0&0
 \end{pmatrix},\quad
 \pi_3=
 \begin{pmatrix}
 1&0&1&0\\
 0&1&0&0
 \end{pmatrix},\quad
 \pi_4=
 \begin{pmatrix}
 0&1&0
 \end{pmatrix},\quad
 \pi_5=0.
\]

Consequently, $\Omega(M)\cong M_\nu$. It follows that $\Omega^2(M)\cong(M_\nu)_\nu\cong M$. 

\vskip5pt

A direct computation shows that for each $h\in \End_\Lam(M)$, there are $c,s,t\in\kk$ such that
\[
 h_1=\begin{pmatrix}c&s\\0&c\end{pmatrix},\qquad
 h_2=c\,\mathrm{Id}_{\kk^2},\qquad
 h_3=\begin{pmatrix}c&t\\0&c\end{pmatrix},\qquad
 h_4=c\,\mathrm{Id}_{\kk},\qquad h_5=0, 
\]
and vice versa. Thus, $\End_\Lam(M)\cong \kk[u,v]/(u^2, uv, v^2)$. In particular, $\End_\Lam(M)$ is local and $M$ is indecomposable. And $\dim \operatorname{End}_\Lambda(M)=\dim \operatorname{End}_\Lambda(M_\nu)=\dim \operatorname{End}_\Lambda(\Omega(M))=3$. 

\vskip5pt

Similarly, a direct computation shows that for each $l\in \operatorname{Hom}_\Lambda(\Omega(M), M)$, $l_1,l_2,l_4,l_5=0$,
\[
 l_3=\begin{pmatrix}0&w\\0&0\end{pmatrix}, w\in \kk
\]
and vice versa. Thus, $\dim \operatorname{Hom}_\Lambda(\Omega(M), M)=\dim \operatorname{Hom}_\Lambda(\Omega(M), M)=1$. 

\vskip5pt

Now, applying $\operatorname{Hom}_\Lambda(-, M)$ to \eqref{PM}, one gets
\[
\begin{aligned}
\dim \ol{\operatorname{Hom}}_\Lambda(\Omega(M), M)&=\dim \operatorname{Hom}_\Lambda(M, M)-\dim \operatorname{Hom}_\Lambda(P_1\oplus P_3, M)+\dim \operatorname{Hom}_\Lambda(\Omega(M), M)\\
 &= 3-(2+2)+1\\
 &=0
\end{aligned}
\]
and $\dim \ol{\operatorname{Hom}}_\Lambda(M, \Omega(M))=\dim \ol{\operatorname{Hom}}_\Lambda(\Omega(M), M)=0$. Similarly, applying $\operatorname{Hom}_\Lambda(-, \Omega(M))$ to \eqref{PM}, one gets $\dim \ol{\operatorname{End}}_\Lambda(M)=\dim \ol{\operatorname{End}}_\Lambda(\Omega(M))=3-2+1=2$. In particular, 
\[
\ol{\End}_\Lam(M)\cong\kk[\rho]/(\rho^2). 
\]
This completes the proof. 
\end{proof}

Now, we want to apply \Cref{lem:orbit-twist} to $\ol{\modu} \Lambda$. It is well-known that $\ol{\modu} \Lambda$ has Auslander--Reiten triangles induced by Auslander--Reiten sequences in $\modu\Lambda$ and suspension $S= \Omega^{-1}$; see, for example, \cite{AuslanderReitenSmalo1995, Happel1988} or \cite[Theorem I.2.4]{ReitenVanDenBergh2002}. By \Cref{prop:M}, $\cO=\{M, \Omega(M)\}$ is a complete $S$-orbit, and $\ol{\End}_{\Lambda}(M)/\operatorname{rad}\ol{\End}_{\Lambda}(M)\cong\kk\cong \ol{\End}_{\Lambda}(\Omega(M))/\operatorname{rad}\ol{\End}_{\Lambda}(\Omega(M))$. Finally, let $\Delta_M=\rho$ be the unique nonzero non-isomorphism in $\ol{\End}_{\Lambda}(M)$ and $\Delta_{\Omega(M)}=\Omega(\rho)$. 

\begin{proposition}\label{prop:twisted-comparison}
There is a natural transformation
$\Delta_{\cO}:\mathrm{Id}_{\ol{\modu}\Lam}
\longrightarrow\mathrm{Id}_{\ol{\modu}\Lam}$ supported on $\cO=\{M,\Omega(M)\}$, 
such that $\Delta_{M}=\rho$, $\Delta_{\cO}^2=0$, and
$\Delta_{\cO}$ commutes with $S=\Omega^{-1}$. The components
$\eps_{U}=\mathrm{Id}_U+\Delta_{U}$ define a
suspension-commuting automorphism $\eps_{\cO}$, and
\begin{equation}\label{eq:twisted-comparison}
 (\theta_\eps)_N
 =(\eps_{\cO})_{\Omega^{-3}(N)}\circ(\theta_0)_N:
 \Sigma N\xlongrightarrow{\sim}\Omega^{-3}(N).
\end{equation}
defines an isomorphism of triangle functors
$\theta_\eps:\Sigma\xlongrightarrow{\sim}\Omega^{-3}$.
\end{proposition}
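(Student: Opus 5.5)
The plan is to apply \Cref{lem:orbit-twist} to $\mathcal C=\ol{\modu}\Lam$ with the orbit $\cO=\{M,\Omega(M)\}$. Then I verify that $\theta_\eps$ inherits the triangle-functor compatibility \eqref{eq:triangle-functor-compatibility} from $\theta_0$.

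\emph{Step 1: hypotheses of \Cref{lem:orbit-twist}.} By \Cref{prop:periodicity}, $S=\Omega^{-1}$ satisfies $S^2\cong\Omega^{-2}$. By \Cref{prop:M}, $\Omega^2M\cong M$, so $\Omega^{-1}M\cong\Omega M$ and $\cO$ is a complete $S$-orbit. By \Cref{prop:M} the two classes $M$ and $\Omega(M)\cong M_\nu$ are distinct, since $\ol{\Hom}(M,\Omega M)=0$ while $\ol{\End}(M)\neq0$. Their stable endomorphism rings are $\kk[\rho]/(\rho^2)$, so the residue fields are $\kk$.

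For conditions (1) and (2) I use Auslander--Reiten theory. Let $\eta\colon \tau U\to E\to U\to S\tau U$ be the AR triangle ending at $U$. Every non-retraction $f\colon Y\to U$ factors through $E\to U$, so it suffices to know that $\Delta_U$ kills the right almost split map. The key observation is that $\Delta_U=\rho$ spans the socle of $\ol{\End}(U)$, which is one-dimensional. The standard fact for Hom-finite categories with Serre duality is that a nonzero socle element of $\End(U)$ is the unique morphism (up to scalar) that vanishes on $\operatorname{rad}(-,U)$. More precisely, under the duality $D\ol{\Hom}(U,-)\cong\ol{\Hom}(-,S\tau U)$, the element annihilating $\operatorname{rad}$ is the image of $\eta$.

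This suggests a concrete route. Here $\tau=\Omega^{-1}$, so $S\tau U=\Omega^{-2}U\cong U$. The connecting map $U\to S\tau U\cong U$ of the AR triangle is therefore an endomorphism of $U$. It is nonzero, it lies in the radical, and by the AR property it satisfies $w\circ f=0$ for non-retractions $f$ and $g\circ w=0$ for non-sections $g$. Since $\operatorname{rad}\ol{\End}(U)$ is one-dimensional, this endomorphism equals $\rho$, which gives (1) and (2) for $U=M$.

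For $\Omega(M)$ I transport the conclusion along the autoequivalence $\Omega$. Taking $\Delta_{\Omega M}=\Omega(\rho)$ gives (3) up to identifying $S^2M$ with $M$. One point needs care: $S(\Delta_{\Omega M})$ must again equal $\rho$ under $S^2M\cong M$. This holds because $\operatorname{rad}$ is one-dimensional and $S$ preserves nonzero radical elements. Over $\kk$ the only nonzero scalar is $1$, so no scalar ambiguity arises. With these hypotheses in place, \Cref{lem:orbit-twist} yields that $\Delta_\cO$ is natural, commutes with $S$, and satisfies $\Delta_\cO^2=0$, and that $\eps_\cO$ is an involutive natural automorphism commuting with $S$.

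\emph{Step 2: compatibility.} Composing with a natural automorphism keeps $\theta_\eps$ a natural isomorphism. For \eqref{eq:triangle-functor-compatibility}, compute
\[
(\theta_\eps)_{SN}=(\eps_\cO)_{S^4N}\circ(\theta_0)_{SN}=-(\eps_\cO)_{S^4N}\circ S((\theta_0)_N)\circ\sigma_N .
\]
Since $\eps_\cO$ commutes with $S$, we have $(\eps_\cO)_{S^4N}=S((\eps_\cO)_{S^3N})$. Hence the right-hand side equals $-S((\theta_\eps)_N)\circ\sigma_N$, which is the required identity.

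The main obstacle is Step 1(1)--(2): identifying $\rho$ with the radical-annihilating (AR-connecting) endomorphism. I would first try the AR-triangle argument above. As a fallback, I would verify by direct matrix computation that $\rho$ kills the irreducible maps into and out of $M$, using the explicit AR sequence of $M$.
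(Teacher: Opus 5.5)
Your proposal is correct and follows essentially the same route as the paper. The paper likewise identifies $\rho$ with the (nonzero, non-invertible) connecting morphism $M\to S\tau M\cong M$ of the Auslander--Reiten triangle ending at $M$, using $\tau=\Omega^{-1}$ and the one-dimensional radical of $\ol{\End}_\Lam(M)$, and then invokes \Cref{lem:orbit-twist}. Your explicit check of \eqref{eq:triangle-functor-compatibility} for $\theta_\eps$ and your treatment of condition (3) spell out steps the paper leaves implicit; the socle/Serre-duality heuristic is unnecessary and not valid as stated in general, since the AR-triangle argument alone suffices.
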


\begin{proof}
Note that there is an Auslander--Reiten triangle $ \Omega(M) \longrightarrow E\longrightarrow M \xlongrightarrow{\rho} M$ since $\tau M=\Omega^{-1}(M)=\Omega(M)$ and $\rho\in \ol{\End}_{\Lambda}(M)$ is the unique nonzero non-isomorphism. Thus, $\Delta_\cO$ satisfies all the conditions in \Cref{lem:orbit-twist}. This completes the proof. 
\end{proof}

\begin{corollary}\label{cor:pretriangulation}
The comparison $\theta_\eps$ in \eqref{eq:twisted-comparison} defines a pre-triangulation $\triangle_\eps$ on $(\cF,\Sigma)$.
\end{corollary}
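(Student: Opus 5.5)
The plan is to deduce the corollary from \Cref{thm:heller}. That theorem produces, for any isomorphism of triangle functors $\theta:(\Sigma,\sigma)\xrightarrow{\sim}(\Omega^{-3},-\mathrm{Id}_{\Omega^{-4}})$, a pre-triangulated category $(\cF,\Sigma,\triangle_\theta)$. So it is enough to check that $\theta_\eps$ is such an isomorphism, i.e.\ that it is a natural isomorphism $\Sigma\to S^3$ satisfying \eqref{eq:triangle-functor-compatibility}. We then set $\triangle_\eps=\triangle_{\theta_\eps}$.

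First, naturality and invertibility. By \Cref{rem:consecutive-zero}-free considerations, $\theta_0$ is a natural isomorphism: it is the Heller comparison of the genuine triangulation $\triangle_0$ from \Cref{conv:transported-triangulation}, so \Cref{thm:heller} applies to it in the converse direction. The family $\eps_{\cO}$ is a natural automorphism of $\mathrm{Id}_{\ol{\modu}\Lam}$ by \Cref{prop:twisted-comparison} and \Cref{lem:orbit-twist}; in fact it is its own inverse, since $\Delta_{\cO}^2=0$ in characteristic two. Whiskering $\eps_{\cO}$ with $\Omega^{-3}$ gives a natural automorphism of $\Omega^{-3}$. Composing it with $\theta_0$ therefore yields a natural isomorphism $\theta_\eps$.

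Second, the compatibility \eqref{eq:triangle-functor-compatibility}. We compute
\[
(\theta_\eps)_{SN}=(\eps_{\cO})_{S^4N}\circ(\theta_0)_{SN}=(\eps_{\cO})_{S^4N}\circ\bigl(-S((\theta_0)_N)\circ\sigma_N\bigr),
\]
where the second equality uses that $\theta_0$ satisfies \eqref{eq:triangle-functor-compatibility}. Since $\eps_{\cO}$ commutes with $S$ (\Cref{lem:orbit-twist}(3)), we have $(\eps_{\cO})_{S^4N}=S\bigl((\eps_{\cO})_{S^3N}\bigr)$. Hence
\[
(\theta_\eps)_{SN}=-S\bigl((\eps_{\cO})_{S^3N}\circ(\theta_0)_N\bigr)\circ\sigma_N=-S((\theta_\eps)_N)\circ\sigma_N.
\]
This is exactly the required identity.

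The main obstacle is making the phrase ``$\eps_{\cO}$ commutes with $S$'' precise at the level of natural transformations. One must check that $S(\Delta_U)=\Delta_{SU}$ holds for arbitrary objects, and not only on representatives of the orbit, including compatibility with the chosen identifications $S^2M\cong M$ (via $\Omega^2M\cong M$). I would handle this as follows. By naturality of $\Delta_{\cO}$ and Krull--Schmidt, it suffices to check indecomposables $U\in\cO$ up to isomorphism. Any isomorphism $\phi:S^2M\to M$ intertwines $S^2(\rho)$ with a nonzero radical endomorphism of $M$. Since $\ol{\End}(M)\cong\kk[\rho]/(\rho^2)$ (\Cref{prop:M}), the only nonzero radical endomorphism is $\rho$, so this endomorphism is $\rho$, and the condition is consistent around the orbit. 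With this in place, \Cref{thm:heller} gives the pre-triangulation $\triangle_\eps$.
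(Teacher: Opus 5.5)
Your proposal is correct and follows the paper's route: reduce to \Cref{thm:heller} by checking that $\theta_\eps$ is an isomorphism of triangle functors. The paper simply cites \Cref{prop:twisted-comparison} for that property, together with \Cref{prop:periodicity} and \Cref{conv:transported-triangulation} for the standing hypotheses ($\Lam$ self-injective, $\Sigma=(-)_\nu$ an autoequivalence on $\cF$), which you leave implicit. Your explicit check of $(\theta_\eps)_{SN}=-S((\theta_\eps)_N)\circ\sigma_N$, and of $\Delta_{SU}=S(\Delta_U)$ via uniqueness of the nonzero radical element, unpacks what that proposition asserts.
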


\begin{proof}
By \Cref{prop:periodicity} and \Cref{conv:transported-triangulation}, $\Lam$ is self-injective and
$\Sigma=(-)_\nu$ is an auto-equivalence on $\cF=\mathcal P(\Lam)$. The desired result now follows from \Cref{thm:heller} and \Cref{prop:twisted-comparison}.
\end{proof}

\begin{lemma}\label{lem:twisted-lambda}
For every $N\in\modu\Lam$, applying $\Omega^3$ to
$(\theta_\eps)_N$ gives
\begin{equation}\label{eq:twisted-lambda}
 (\lambda_\eps)_N\coloneqq \Omega^3((\theta_\epsilon)_N)
 =(\mathrm{Id}_N+\Delta_{N})\circ(\lambda_0)_N.
\end{equation}
\end{lemma}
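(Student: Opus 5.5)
Since $\Omega^3$ is an additive functor on $\ol{\modu}\Lam$, we start from \eqref{eq:twisted-comparison} and apply it to the composite:
\[
 \Omega^3\bigl((\theta_\eps)_N\bigr)
 =\Omega^3\bigl((\eps_{\cO})_{\Omega^{-3}N}\bigr)\circ\Omega^3\bigl((\theta_0)_N\bigr)
 =\Omega^3\bigl((\eps_{\cO})_{\Omega^{-3}N}\bigr)\circ(\lambda_0)_N .
\]
It then suffices to identify $\Omega^3\bigl((\eps_{\cO})_{\Omega^{-3}N}\bigr)$ with $\mathrm{Id}_N+\Delta_N$ under the standard isomorphism $\Omega^3\Omega^{-3}\cong\mathrm{Id}$. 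This is the same identification already used to define $\lambda_0$ and $\lambda_\eps$. Additivity reduces the claim to $\Omega^3(\mathrm{Id}_{\Omega^{-3}N})=\mathrm{Id}_N$, which is immediate, together with
\[
 \Omega^3\bigl(\Delta_{\Omega^{-3}N}\bigr)=\Delta_N .
\]

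For the second identity we use that $\Delta_{\cO}$ commutes with $S=\Omega^{-1}$ (\Cref{prop:twisted-comparison}, via condition (3) of \Cref{lem:orbit-twist}). This gives $S(\Delta_W)=\Delta_{SW}$ modulo the structural isomorphisms. Since $S$ is an autoequivalence with quasi-inverse $\Omega$, we equally get $\Omega(\Delta_{W})=\Delta_{\Omega W}$. Iterating three times with $W=\Omega^{-3}N$ yields $\Omega^3(\Delta_{\Omega^{-3}N})=\Delta_{\Omega^3\Omega^{-3}N}$. Finally, naturality of $\Delta_{\cO}$ (\Cref{lem:orbit-twist}) with respect to the isomorphism $\Omega^3\Omega^{-3}N\cong N$ transports this to $\Delta_N$.

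The main point requiring care is making ``commutes with $S$'' precise. We must show it is compatible with the specific unit/counit isomorphisms $\Omega S\cong\mathrm{Id}\cong S\Omega$ used to identify $\Omega^3\Omega^{-3}$ with the identity. We handle this by naturality. Any natural isomorphism $\alpha:F\Rightarrow G$ between endofunctors satisfies $\Delta_{G X}\circ\alpha_X=\alpha_X\circ\Delta_{FX}$, because $\Delta_{\cO}$ is a natural endotransformation of the identity. So $\Delta$ commutes with every structural isomorphism, and the choice of identification does not matter.

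A further detail is on $\cO=\{M,\Omega M\}$. There $\Delta_{\Omega M}=\Omega(\rho)$ by definition, and $\Omega^2 M\cong M$. The same naturality argument shows that $\Omega$ applied to $\Delta_{\Omega M}$ gives $\Delta_M=\rho$, up to the chosen isomorphism. Off the orbit both sides vanish, since $\Omega^{\pm1}$ preserves $\cO$ and its complement. Combining these steps gives \eqref{eq:twisted-lambda}.
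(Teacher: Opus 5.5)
Your proposal is correct and takes the same route as the paper, whose proof is the one-line remark that applying the functor $\Omega^3$ to \eqref{eq:twisted-comparison} gives the formula, since $(\lambda_0)_N=\Omega^3((\theta_0)_N)$. Your extra step, showing that $\Omega^3$ carries $(\eps_{\cO})_{\Omega^{-3}N}$ to $\mathrm{Id}_N+\Delta_N$ under $\Omega^3\Omega^{-3}\cong\mathrm{Id}$ via the $S$-compatibility and naturality of $\Delta_{\cO}$, makes explicit what the paper leaves implicit.
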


\begin{proof}
This follows from applying $\Omega^3$ to \eqref{eq:twisted-comparison}. Recall that $(\lambda_0)_N=\Omega^3((\theta_0)_N)$ by definition. 
\end{proof}

\subsection{Failure of the four-by-four property}
\label{sec:square}
Next, our aim is to describe the obstruction of $\mathbf{TR4}$ for $\triangle_\eps$.  

In the following, we construct the modules $A,B$ based on $M$ via the Auslander-- Reiten theory. See details in \cite{AuslanderReitenSmalo1995}.
\begin{definition}\label{def:modules-AB}
By the stable isomorphism of $M$ in \Cref{prop:M}, there exists a unique Auslander--Reiten sequence in $\operatorname{Ext}^1_\Lambda(\Omega(M),M)$ up to isomorphism. 
Let
\begin{equation}\label{eq:AR-sequence}
 0\longrightarrow M\xlongrightarrow{\iota}A
 \xlongrightarrow{\pi}\Omega(M)\longrightarrow0
\end{equation}
be a representative of the Auslander--Reiten sequence in $\operatorname{Ext}^1_\Lambda(\Omega(M),M)$, and define $A$ to be its
middle term. Choose a short exact sequence
\begin{equation}\label{eq:extension}
 0\longrightarrow A\xlongrightarrow{u}B
 \xlongrightarrow{p}M\longrightarrow0
\end{equation}
representing the nonzero element of $\operatorname{Ext}^1_\Lam(M,A)$, and
define $B$ to be its middle term; the existence and uniqueness of the nonzero class follows from the isomorphism $\operatorname{Ext}^1_\Lambda(M,A)\cong \ol{\Hom}_\Lam(\Omega(M),A)$ and \Cref{prop:intrinsic-relations}.
\end{definition}

\begin{proposition}\label{prop:intrinsic-relations}
In $\ol{\modu}\Lam$ one has $\ol{\Hom}_\Lam(\Omega(M),A)\cong\kk$, 
$
 S^2A\cong A,~S^2B\cong B.
$
Also, there are isomorphisms
\[\ol{\Hom}_\Lam(\Omega(M),B)=0=\ol{\Hom}_\Lam(B,\Omega(M)),
\]
and
\begin{equation}\label{eq:endomorphism-algebras}
 \ol{\End}_\Lam(A)\cong\kk[x]/(x^2),\qquad
 \ol{\End}_\Lam(B)\cong\kk[x,y]/(x^2,y^2).
\end{equation}
In particular, $A$ and $B$ are indecomposable in the stable category, and
neither has a direct summand in the orbit $\{M,\Omega(M)\}$. The period-two
isomorphisms may be chosen compatibly with the distinguished triangles induced by
\eqref{eq:AR-sequence} and \eqref{eq:extension}.
\end{proposition}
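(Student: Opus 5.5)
The plan is to work entirely in the triangulated category $\ol{\modu}\Lam$, using the triangles induced by \eqref{eq:AR-sequence} and \eqref{eq:extension} and the data of \Cref{prop:M}.

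\textbf{Step 1: the space $\ol{\Hom}_\Lam(\Omega(M),A)$.} Since $\ol{\Hom}_\Lam(\Omega(M),\Omega(M))\cong\ol{\End}_\Lam(M)\cong\kk[\rho]/(\rho^2)$, we apply $\ol{\Hom}_\Lam(\Omega(M),-)$ to the Auslander--Reiten triangle
\[
 M\xrightarrow{\iota}A\xrightarrow{\pi}\Omega(M)\xrightarrow{\eta}SM,
\]
with $SM=\Omega^{-1}M\cong\Omega M$. Here $\eta$ is the socle element $\Omega(\rho)$ up to the identification. By \Cref{prop:M}, $\ol{\Hom}_\Lam(\Omega(M),M)=0$. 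Hence $\ol{\Hom}_\Lam(\Omega(M),A)$ embeds into the kernel of composition with $\eta$ on $\ol{\End}(\Omega(M))$. By the Auslander--Reiten property, that kernel is exactly the radical, which is one-dimensional. Surjectivity onto the radical holds because $\pi$ is right almost split. This gives $\ol{\Hom}_\Lam(\Omega(M),A)\cong\kk$.

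\textbf{Step 2: periodicity.} $S^2$ preserves $\cO$, and by \Cref{prop:periodicity} it fixes $M$ and $\Omega(M)$. Applying $S^2$ to the Auslander--Reiten triangle therefore gives an Auslander--Reiten triangle with the same end terms. Uniqueness of Auslander--Reiten triangles yields $S^2A\cong A$, compatibly with the triangle. The nonzero class in $\ol{\Hom}(\Omega M,A)$ is unique, so it is preserved by $S^2$ up to the chosen isomorphisms. Then TR3, with the two outer isomorphisms fixed, gives $S^2B\cong B$ compatibly. This is the compatibility statement.

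\textbf{Step 3: endomorphism rings and vanishing.} Using the long exact Hom sequences of the two triangles, we compute:
\begin{itemize}
\item $\ol{\Hom}(A,M)$, $\ol{\Hom}(A,\Omega M)$ and $\ol{\Hom}(M,A)$, via the almost split property. For instance, $\ol{\Hom}(A,\Omega M)$ is spanned by $\pi$, and $\ol{\Hom}(M,A)$ by $\iota$.
\item $\dim\ol{\End}(A)=2$, spanned by the identity and $\iota\circ s\circ\pi$ for a suitable $s$. Its square vanishes because $\pi\iota=0$, so $\ol{\End}_\Lam(A)\cong\kk[x]/(x^2)$.
\item For $B$, applying $\ol{\Hom}(\Omega M,-)$ and $\ol{\Hom}(-,\Omega M)$ to $A\to B\to M\to SA$. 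The connecting maps are the nonzero class from Step 1, so the nonzero terms cancel and both spaces vanish.
\item $\ol{\End}(B)$, which should be $4$-dimensional with two square-zero generators: one coming from $A$ and one from $M$ via $B\to M\to\cdots\to A\to B$.
\end{itemize}
Local endomorphism rings give indecomposability. The vanishing of Hom with the orbit, together with $\ol{\Hom}(M,B)$ and $\ol{\Hom}(B,M)$ being nonzero only through the radical, excludes summands in $\cO$.

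\textbf{Main obstacle.} I expect the hardest part to be the exact ring structure $\kk[x,y]/(x^2,y^2)$ for $B$. In particular, $xy=yx\neq0$ and the nonvanishing of the top element are not formal from the long exact sequences. My first attempt is to verify this by direct matrix computation with the explicit module $M$, after writing $A$ and $B$ explicitly via \eqref{PM}.
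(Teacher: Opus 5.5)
Your outline follows the paper's route for most of the statement. The computation of $\ol{\Hom}_\Lam(\Omega(M),A)\cong\kk$ comes from the Auslander--Reiten triangle together with $\ol{\Hom}_\Lam(\Omega(M),M)=0$. $S^2A\cong A$ follows from uniqueness of Auslander--Reiten triangles, and $S^2B\cong B$ from TR3, because the relevant class is the unique nonzero element of a one-dimensional $\kk$-space. Finally, $\ol{\Hom}_\Lam(\Omega(M),B)$ and $\ol{\Hom}_\Lam(B,\Omega(M))$ vanish because the connecting map is the nonzero $f\colon\Omega(M)\to A$ with $\pi f=\rho_{\Omega(M)}$. There are, however, two real problems.

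\textbf{The nilpotent element of $\ol{\End}_\Lam(A)$.} The radical generator you propose is zero. Indeed, $\iota\circ s\circ\pi$ needs $s\in\ol{\Hom}_\Lam(\Omega(M),M)$, and this space vanishes by Proposition~\ref{prop:M}. The nonzero radical element passes through only one member of the orbit. It is $n=f\pi$, which also equals $\iota a$ for the map $a\colon A\to M$ with $a\iota=\rho$, as in the paper. Its square vanishes because $n^2=f\rho_{\Omega(M)}\pi$. Here $f\rho_{\Omega(M)}=0$, since $\pi f\rho_{\Omega(M)}=\rho_{\Omega(M)}^2=0$ and composing with $\pi$ is injective on $\ol{\Hom}_\Lam(\Omega(M),A)$. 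Equivalently, $n^2=(f\pi)(\iota a)=f(\pi\iota)a=0$. As written, your argument never shows that the non-identity basis element is nilpotent, so locality of $\ol{\End}_\Lam(A)$ (and hence the identification with $\kk[x]/(x^2)$) is not justified.

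\textbf{The endomorphism algebra of $B$.} This is the more serious gap: the proposal contains no argument for $\ol{\End}_\Lam(B)\cong\kk[x,y]/(x^2,y^2)$. That result is exactly what the paper uses to get indecomposability of $B$ and to exclude $M$ as a summand (local, of dimension $4$ versus $2$).

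Your substitute for the exclusion of $M$ does not work as stated. The vanishing $\ol{\Hom}_\Lam(\Omega(M),B)=0$ only rules out $\Omega(M)$. For $M$ you would need every composite $M\to B\to M$ to be radical. Take $s_1\colon M\to B$ with $ps_1=\rho$ and $r_1\colon B\to M$ with $r_1u=a$; nothing you wrote controls $r_1s_1$.

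The paper derives the four-dimensional local algebra from the Hom sequences of the two triangles together with $\rho^2=\rho_{\Omega(M)}^2=0$. Carry out that bookkeeping with $\eta\colon M\to SA$ the connecting morphism of the extension; this is the paper's $\eta$, not yours. You get
$\dim\ol{\End}_\Lam(B)=\dim\ol{\Hom}_\Lam(B,A)+\dim\ker\bigl(\eta\circ-\colon\ol{\Hom}_\Lam(B,M)\to\ol{\Hom}_\Lam(B,SA)\bigr)$.
This equals $4$ precisely when $\eta\circ a=0$ and $g\circ\eta=0$ for all $g\in\ol{\Hom}_\Lam(SA,A)$.

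So your caution that this is not purely formal is reasonable. Still, the proof is incomplete until you actually establish these two vanishings, for instance by the explicit matrix computation you propose. You then also need to exhibit $x,y$ with $xy=yx\neq0$.
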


\begin{proof}
By \Cref{prop:M}, $M\cong \Omega^2(M)$. Under this identification, the distinguished triangle in $\ol{\modu}\Lam$ corresponding 
\eqref{eq:AR-sequence} has the form
\begin{equation}\label{eq:AR-triangle-Y}
 M\xlongrightarrow{\iota}A\xlongrightarrow{\pi}\Omega(M)
 \xlongrightarrow{\rho_\Omega(M)}\Omega(M),
\end{equation}
where $\rho_\Omega(M)=S(\rho)$ is the nonzero radical element of
$\ol{\End}_\Lam(\Omega(M))\cong\kk[\rho_\Omega(M)]/(\rho_\Omega(M)^2)$. Applying the 
functor $\ol{\Hom}_\Lam(\Omega(M),-)$ to \eqref{eq:AR-triangle-Y}, we could get a long exact sequence of abelian groups. Combining with 
$
 \ol{\Hom}_\Lam(\Omega(M),M)=0
$
from \Cref{prop:M}, we get that $ \ol{\Hom}_\Lam(\Omega(M),A)=\kk f$ and $\pi f=\rho_\Omega(M)$ for some $f\in\ol{\Hom}_\Lambda(\Omega(M),A)$. Similarly, applying the 
functor $\ol{\Hom}_\Lambda(M,-)$ and $\ol{\Hom}_\Lam(-,\Omega(M)$ to \eqref{eq:AR-triangle-Y},  we could conclude by \Cref{prop:M} that
\[
\begin{aligned}
 \ol{\Hom}_\Lam(M,A)&=\kk\iota,&
 \ol{\Hom}_\Lam(A,M)&=\kk a,\\
 \ol{\Hom}_\Lam(A,\Omega(M))&=\kk\pi,& a\iota=\rho
&\end{aligned}
\]
for some  $a$. Combining with these equations, by applying $\ol{\Hom}_\Lambda(-,A)$ to \eqref{eq:AR-triangle-Y}, we get 
\[
 \ol{\End}_\Lam(A)
 =\kk\,\mathrm{Id}_A\oplus\kk n,
 \qquad n=\iota a=f\pi,\qquad n^2=0.
\]
This proves the first isomorphism in \eqref{eq:endomorphism-algebras}.
Applying $S^2$ to \eqref{eq:AR-triangle-Y} and using $S^2M\cong M$ and
$S^2\Omega(M)\cong \Omega(M)$, uniqueness of Auslander--Reiten triangles gives
$S^2A\cong A$.

Next,
consider the isomorphism $
 \operatorname{Ext}^1_\Lam(M,A)
\cong\ol{\Hom}_\Lam(\Omega(M),A)$. This is isomorphic to $\kk$.
Thus the nonzero extension \eqref{eq:extension} exists. Let $\eta:M\longrightarrow SA$ be its connecting morphism.
We may take $\eta=Sf$. In the stable category $\ol{\modu}\Lam$
$(Su)\eta=0$ (see \Cref{rem:consecutive-zero}), and hence $uf=0$. Applying
$\ol{\Hom}_\Lam(\Omega(M),-)$ to this triangle and using
$\ol{\Hom}_\Lam(\Omega(M),M)=0$ yields
$\ol{\Hom}_\Lam(\Omega(M),B)=0$. Dually, applying
$\ol{\Hom}_\Lam(-,\Omega(M))$ and using $\pi f=\rho_\Omega(M)\ne0$ gives
$\ol{\Hom}_\Lam(B,\Omega(M))=0$.

It remains only to record the endomorphisms of the second middle term.
By applying stable Hom to the two triangles above and the relations $\rho^2=\rho_\Omega(M)^2=0$, we conclude that
$ \dim_{\kk}\ol{\End}_\Lam(B)=4.
$
More precisely, its radical has a basis $x,y,xy$, with $x^2=y^2=0$ and $xy=yx$,
which proves the second isomorphism in
\eqref{eq:endomorphism-algebras}. 

Both endomorphism algebras in \eqref{eq:endomorphism-algebras} are local.
Moreover, $\ol{\Hom}_\Lam(\Omega(M),A)$ has dimension one, whereas
$\ol{\Hom}_\Lam(\Omega(M),M)$ and $\ol{\End}_\Lam(\Omega(M))$ have dimensions zero and two,
respectively. Thus $A$ is not isomorphic to $M$ or $\Omega(M)$. The stable
endomorphism algebra of $B$ has dimension four, while those of $M$ and $\Omega(M)$
have dimension two, so $B$ is not isomorphic to either of them. This proves
the assertion about the orbit.

Finally, $S^2$ preserves the unique nonzero class in
$\operatorname{Ext}^1_\Lam(M,A)$ and fixes its end terms up to the chosen
period-two isomorphisms. Hence $S^2B\cong B$. The uniqueness in both
constructions allows the period-two isomorphisms to be chosen compatibly
with the two distinguished triangles.
\end{proof}

\begin{lemma}\label{lem:complete-horseshoe}
There are distinguished exact $3$-$\Sigma$-sequences for $\triangle_0$
\[
 T_X:\quad
 E_{X,0}\xlongrightarrow{d_{X,0}}E_{X,1}
 \xlongrightarrow{d_{X,1}}E_{X,2}
 \xlongrightarrow{d_{X,2}}\Sigma E_{X,0},
 \qquad X\in\{A,B,M\},
\]
and a degreewise split short exact sequence of $\Sigma$-periodic complexes
\begin{equation}\label{eq:complete-horseshoe}
 0\longrightarrow T_A\xlongrightarrow{x_\bullet}T_B
 \xlongrightarrow{q_\bullet}T_M\longrightarrow0
\end{equation}
whose sequence of first kernels is \eqref{eq:extension}. In particular,
$x_i:E_{A,i}\longrightarrow E_{B,i}$ is a split monomorphism with cokernel
$E_{M,i}$ for $0\leq i\leq2$.
\end{lemma}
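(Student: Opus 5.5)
The plan is to build the three sequences by a horseshoe construction that follows the standard $\triangle_0$-distinguished resolutions supplied by Proposition~\ref{prop:periodicity}. For each $N\in\modu\Lam$, tensoring the bimodule resolution \eqref{eq:bimodule-resolution} with $N_\nu$ gives the exact sequence \eqref{eq:tensor-sequence},
\[
0\to N\xrightarrow{J_N}P_N^0\xrightarrow{D_N^0}P_N^1\xrightarrow{D_N^1}P_N^2\xrightarrow{Q_N}\Sigma N\to 0 .
\]
This construction is functorial in $N$ and exact in $N$, because \eqref{eq:bimodule-resolution} splits as a sequence of left $\Lam$-modules and each $Q_i$ is projective as a left module. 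Splicing with $\Sigma J_N$, I would set $E_{N,i}=P_N^i$, $d_{N,0}=D_N^0$, $d_{N,1}=D_N^1$ and $d_{N,2}=(\Sigma J_N)\circ Q_N$. This produces an exact candidate $3$-$\Sigma$-sequence $T_N$. Its $\Sigma$-periodic continuation is exact because it is spliced from the $\Sigma^k$-twists of \eqref{eq:tensor-sequence}, and its first kernel is $N$.

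Next I apply this functor to \eqref{eq:extension}. Since $N\mapsto N_\nu\otimes_\Lam Q_i$ is exact, we get a short exact sequence of candidate sequences $0\to T_A\to T_B\to T_M\to 0$ whose maps $x_\bullet,q_\bullet$ are induced by $u,p$. These maps commute with every $d$, including $d_{-,2}$, by naturality of $J$ and $Q$. Each degree is a short exact sequence of projective-injective modules, so it splits, and the split monomorphism $x_i$ has cokernel $E_{M,i}$. On first kernels the induced sequence is exactly $0\to A\xrightarrow{u}B\xrightarrow{p}M\to0$, which is \eqref{eq:extension}.

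The main point, and the main obstacle, is showing that each $T_X$ lies in $\triangle_0$, that is, that $\lambda_{T_X}=(\lambda_0)_X$ in the stable category. The resolution coming from \eqref{eq:bimodule-resolution} is natural in $N$, so the complete Heller comparisons $\delta_{T_N}$ assemble into a natural isomorphism $\Sigma\to\Omega^{-3}$. This isomorphism is compatible with $\sigma$ because the construction commutes with the cosyzygy, which one sees by tensoring syzygy sequences. Hence by Theorem~\ref{thm:heller} it is the Heller comparison of some pre-triangulation, and we need that this pre-triangulation is $\triangle_0$.

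I would try two routes for this. The first route is to fix the Heller comparison $\theta_0$ to be this natural one. The Convention in Remark~\ref{conv:transported-triangulation} only determines $\triangle_0$ through $\theta_0$, so one can check that the transported triangulation from $\ol{\modu}R$ has these sequences as triangles. In that case the triangles $T_X$ are distinguished by definition.

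The fallback route, if $\theta_0$ is regarded as already fixed, runs as follows. The two comparisons then differ by an automorphism of the identity functor. I would verify that this automorphism is trivial on $A$, $B$ and $M$ by using the $S^2$-periodicity and the endomorphism algebras in Proposition~\ref{prop:intrinsic-relations}, and otherwise correct the resolutions by automorphisms of the $E_{X,i}$. Such a correction would have to be done compatibly with $x_\bullet$ and $q_\bullet$, using that the horseshoe is degreewise split.
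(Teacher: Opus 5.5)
The gap is exactly at the step you flag as the main obstacle, and neither of your routes closes it.

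Your construction of the rows is sound. Since $N\mapsto N_\nu\otimes_\Lam Q_i$ is exact and $J$, $Q$ are natural, applying it to \eqref{eq:extension} gives a degreewise split horseshoe. Every row of it has the natural comparison $\theta^{\mathrm{bim}}_N=\delta_{T_N}$, so the horseshoe is coherent by construction.

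\textbf{Route~1.} You are not free to choose $\theta_0$. Remark~\ref{conv:transported-triangulation} fixes $\triangle_0$ as the triangulation transported from $\ol{\modu}R$, and $\theta_0$ is then determined by $\triangle_0$ via Theorem~\ref{thm:heller}. Your sentence ``one can check that the transported triangulation has these sequences as triangles'' is just the assertion $\theta^{\mathrm{bim}}_X=(\theta_0)_X$ for $X\in\{A,B,M\}$. That is the whole content of the lemma, and you give no argument for it.

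\textbf{Route~2.} This cannot work as described. The discrepancy $\theta^{\mathrm{bim}}\circ\theta_0^{-1}$ is a natural automorphism commuting with $S$. Nothing in the $S^2$-periodicity or the endomorphism algebras forces such an automorphism to be trivial: $\eps_{\cO}$ of Proposition~\ref{prop:twisted-comparison} is one, equal to the identity at $A$ and $B$ and to $\mathrm{Id}_M+\rho$ at $M$. In that situation no correction compatible with $x_\bullet,q_\bullet$ can exist. The reason is that in any degreewise split horseshoe with first kernels \eqref{eq:extension}, the comparison of the $M$-row is determined by those of the $A$- and $B$-rows:
\begin{itemize}
\item the three comparisons are the stable classes of a morphism of short exact sequences from $\Sigma$ of \eqref{eq:extension} to the third cosyzygies;
\item two such morphisms that agree stably on the first two terms differ on the third by a term which, up to the identifications $\Sigma\cong S^3$, has the form $p\circ h\circ\eta$ with $h\in\ol{\Hom}_\Lam(SA,B)$;
\item $h\eta=0$ by the argument in Lemma~\ref{lem:relative-filler}.
\end{itemize}

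This is the same rigidity that drives Theorem~\ref{thm:fixed-boundary-failure}, and it shows the missing input must be specific to $\theta_0$. The paper's own proof has the same gap. It lifts the stable identification $(\theta_{0,A},\theta_{0,B},\theta_{0,M})$ by the comparison theorem, using only that $\theta_0$ is an isomorphism of triangle functors. But $\theta_\eps$ has that property too, and $\theta_0$ is recovered from $\theta_\eps$ by the same twist since $\eps_{\cO}^2=\mathrm{Id}$. A horseshoe for $\theta_\eps$ would therefore, by Theorem~\ref{thm:fixed-boundary-failure} with the roles exchanged, contradict $\mathbf{TR4}$ for $\triangle_0$.

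There are two honest ways to close the gap.
\begin{itemize}
\item Turn Route~1 into a genuine change of convention. Replace $\theta_0$ by $\theta^{\mathrm{bim}}$ throughout, after proving carefully that $\theta^{\mathrm{bim}}$ is an isomorphism of triangle functors; this is the Geiss--Keller--Oppermann twisted-periodic setting, and it needs more than ``tensoring syzygy sequences''. Your horseshoe then lies in $\triangle_{\theta^{\mathrm{bim}}}$ by construction. This is enough for Theorem~\ref{thm:main}, whose proof never uses that $\triangle_0$ is the transported triangulation, but it proves a variant of the stated lemma.
\item Keep the transported $\triangle_0$ and use its $\mathbf{TR4}$. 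Apply Lemma~\ref{lem:tr4-implies-fixed-boundary} inside $\triangle_0$ to rows $T_A,T_B\in\triangle_0$, joined by a degreewise split morphism inducing $u$, together with the split first two columns. Then use Lemma~\ref{lem:relative-filler} as in the proof of Theorem~\ref{thm:fixed-boundary-failure} to identify the forced third row with the cokernel row, which therefore lies in $\triangle_0$.
\end{itemize}
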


\begin{proof}
Choose complete projective-injective resolutions of $A$ and $M$ and apply
the horseshoe lemma to \eqref{eq:extension}. We get a degreewise split
short exact sequence of complete resolutions whose sequence of zeroth
cycles is \eqref{eq:extension}. Its third cosyzygies represent the triangle
obtained by applying $S^3$ to the stable triangle of
\eqref{eq:extension}. Since
$\theta_0:\Sigma\longrightarrow S^3$ is an isomorphism of triangle
functors, its components on $A,B,M$ identify this triangle with the
$\Sigma$-suspension of the original one. The comparison theorem lifts this
identification to the three complete resolutions. By adjoining
projective-injective summands to form exact $3$-$\Sigma$-sequences and taking truncations, we get
\eqref{eq:complete-horseshoe}, and every row has comparison
$\theta_0$.
\end{proof}

The rows $T_A$ and $T_B$ also belong to $\triangle_\eps$. Indeed,
\Cref{prop:intrinsic-relations} shows that neither $A$ nor $B$ has a stable
direct summand in $\{M,\Omega(M)\}$, so their comparisons are unchanged by the
twist. The first two component sequences in \eqref{eq:complete-horseshoe}
give split distinguished columns
\[
 E_{A,i}\xlongrightarrow{x_i}E_{B,i}
 \xlongrightarrow{q_i}E_{M,i}\longrightarrow\Sigma E_{A,i}
\]
for each $i=0,1$.

\begin{lemma}\label{lem:relative-filler}
Keep the first two components $x_0,x_1$ of $x_\bullet$ fixed. If
$x:E_{A,2}\longrightarrow E_{B,2}$ is any third component which completes
them to a morphism $T_A\longrightarrow T_B$, then
$
 x=sx_2
$
for an automorphism
\[
 \bigl(\mathrm{Id}_{E_{B,0}},\mathrm{Id}_{E_{B,1}},s,
 \Sigma\mathrm{Id}_{E_{B,0}}\bigr)
\]
of $T_B$. In particular, every such $x$ is a split monomorphism.
\end{lemma}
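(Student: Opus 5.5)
The plan is to study the difference $x-x_2$. Both $x$ and $x_2$ complete $(x_0,x_1)$ to a morphism $T_A\to T_B$, so $d_{B,1}x_1=x\,d_{A,1}=x_2\,d_{A,1}$ and $d_{B,2}x=(\Sigma x_0)d_{A,2}=d_{B,2}x_2$. Hence $\delta=x-x_2:E_{A,2}\to E_{B,2}$ satisfies $\delta\circ d_{A,1}=0$ and $d_{B,2}\circ\delta=0$.

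Exactness of the $\Sigma$-periodic continuations of $T_A$ and $T_B$ then lets us factor $\delta$ on both sides. Since $\delta$ kills $\operatorname{Im}d_{A,1}=\operatorname{Ker}d_{A,2}$, it factors through $\operatorname{Im}d_{A,2}=\Omega^{-3}$-cosyzygy $\Sigma A$, viewed as a submodule of $\Sigma E_{A,0}$. Its image lies in $\operatorname{Ker}d_{B,2}=\operatorname{Im}d_{B,1}\cong\Omega^{-2}B$. So $\delta$ gives a module map $\bar\delta:\Sigma A\to \Omega^{-2}B$, that is, a map from the third cosyzygy of $A$ to the second cosyzygy of $B$.

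The next step is to show that every such $\bar\delta$ is induced by a map $\delta=\gamma\circ d_{B,1}$-type correction that can be absorbed into an automorphism of $T_B$ fixing the first two terms. The natural candidate is
\[
 s=\mathrm{Id}_{E_{B,2}}+h\circ d_{B,2}
\]
for some $h:\Sigma E_{B,0}\to E_{B,2}$ with $d_{B,2}\,h\,d_{B,2}=0$ and $h\,d_{B,2}\,d_{B,1}=0$. The latter is automatic, and the former holds whenever $\operatorname{Im}h\subseteq\operatorname{Ker}d_{B,2}$. Then $s d_{B,1}=d_{B,1}$ and $d_{B,2}s=d_{B,2}$, so $(\mathrm{Id},\mathrm{Id},s,\Sigma\mathrm{Id})$ is an endomorphism of $T_B$. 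It is an automorphism because $(h d_{B,2})^2=h(d_{B,2}h)d_{B,2}=0$, so $s$ is unipotent. We then want $s x_2=x$, i.e.\ $h\,d_{B,2}\,x_2=\delta$. Using $d_{B,2}x_2=(\Sigma x_0)d_{A,2}$, this becomes $h(\Sigma x_0)d_{A,2}=\delta$.

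This reduces the lemma to the following extension problem. Write $\delta=\delta'\circ d_{A,2}$ with $\delta':\Sigma A\to\operatorname{Ker}d_{B,2}\subseteq E_{B,2}$. We must extend $\delta'$ along the restriction of $\Sigma x_0$ to $\Sigma A\hookrightarrow\Sigma E_{A,0}\to\Sigma E_{B,0}$, with values in $\operatorname{Ker}d_{B,2}$. Since $x_0$ is split mono, $\Sigma x_0$ is split mono, so $\Sigma E_{A,0}$ is a summand of $\Sigma E_{B,0}$. It therefore suffices to extend $\delta'$ from $\Sigma A$ to $\Sigma E_{A,0}$ with image in $\operatorname{Ker}d_{B,2}\cong\Omega^{-2}B$.

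The main obstacle is this extension, because $\Omega^{-2}B$ is not injective. The obstruction is the class of $\delta'$ in $\operatorname{Ext}^1(\Sigma^{}\!\text{-cokernel},\Omega^{-2}B)$, which I expect to correspond to an element of $\ol{\Hom}_\Lam(\Sigma A,\Omega^{-2}B)\cong\ol{\Hom}_\Lam(SA,B)$ (using $S^2B\cong B$ and $\Sigma\cong S^3$). First I would show the obstruction vanishes by exploiting freedom not yet used. The honest map $\delta$ must also be compatible with the degree-$0$ and degree-$1$ data through the periodic structure. The key point is that $\delta'$ factors as $\Sigma A\to\operatorname{Im}$ of something determined by $x_1$, or else is stably zero because a stable map $SA\to B$ composed appropriately with $u$ must vanish. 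If a direct argument fails, I would use the relations of \Cref{prop:intrinsic-relations}: $\ol{\Hom}_\Lam(\Omega(M),B)=0$ together with the long exact sequence of \eqref{eq:extension} computes $\ol{\Hom}_\Lam(SA,B)$. I would then check that the component of $\bar\delta$ there lies in the image of maps factoring through $\Sigma x_0$, which is exactly what permits the extension.

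Once $s$ is found, the final claim follows: $x=sx_2$ with $s$ invertible and $x_2$ split mono by \Cref{lem:complete-horseshoe}, so $x$ is split mono.
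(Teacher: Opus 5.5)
\textbf{There is a genuine gap in the choice of $s$.} Your setup is right: $\delta=x-x_2$ satisfies $\delta\circ d_{A,1}=0$ and $d_{B,2}\circ\delta=0$, so it induces $\bar\delta:\Sigma A\to Z=\operatorname{Ker}d_{B,2}\simeq S^2B$. The trouble is the ansatz $s=\mathrm{Id}+h\circ d_{B,2}$. The map $d_{B,2}$ factors as $E_{B,2}\twoheadrightarrow\Sigma B\hookrightarrow\Sigma E_{B,0}$. So the correction $n=h\circ d_{B,2}$ induces the map $h|_{\Sigma B}:\Sigma B\to Z$, which factors through the projective $\Sigma E_{B,0}$. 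Consequently $n x_2=h(\Sigma x_0)d_{A,2}$ induces a stably zero map $\Sigma A\to Z$, and $s x_2=x$ forces $\bar\delta$ to be stably zero.

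Nothing forces this. For $(x_0,x_1,x_2+\delta)$, the morphism conditions are exactly the two vanishing conditions above. So for every $g\in\Hom_\Lam(\Sigma A,Z)$, the map $x=x_2+\mathrm{inc}_Z\circ g\circ d_{A,2}$ completes $(x_0,x_1)$; here $d_{A,2}$ is corestricted to $\Sigma A$ and $\mathrm{inc}_Z:Z\hookrightarrow E_{B,2}$ is the inclusion. Moreover $\ol{\Hom}_\Lam(\Sigma A,Z)\cong\ol{\Hom}_\Lam(SA,B)$ is nonzero:
\begin{itemize}
\item Since $\ol{\Hom}_\Lam(\Omega(M),B)=0$, the triangle $\Omega(M)\to SA\xrightarrow{q}M\xrightarrow{\rho}M$ shows that precomposition with $q$ is surjective onto $\ol{\Hom}_\Lam(SA,B)$.
\item It is also injective, because $\ell\rho=0$ for every $\ell:M\to B$.
\item Hence $\ol{\Hom}_\Lam(M,B)\cong\ol{\Hom}_\Lam(SA,B)$, and $\ol{\Hom}_\Lam(M,B)$ contains $u\iota\neq0$, using $\ol{\Hom}_\Lam(M,\Omega(M))=0$.
\end{itemize}
So for $g$ stably nonzero no $h$ exists. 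There is no ``freedom not yet used'' that kills the obstruction, and your reduction lands on a false statement.

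\textbf{What is missing is the actual content of the lemma.} You must allow all nilpotent corrections $n=\mathrm{inc}_Z\circ\bar n\circ\mathrm{pr}_B$, where $\mathrm{pr}_B:E_{B,2}\twoheadrightarrow C_B=\operatorname{Coker}d_{B,1}$ and $\bar n\in\Hom_\Lam(C_B,Z)$ is arbitrary. These still satisfy $n^2=0$, $nd_{B,1}=0$ and $d_{B,2}n=0$. Then solving $nx_2=\delta$ becomes surjectivity of the restriction $\Hom_\Lam(C_B,Z)\to\Hom_\Lam(C_A,Z)$ along $\bar x_2$.

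Stably, this restriction is $(Su)^*:\ol{\Hom}_\Lam(SB,B)\to\ol{\Hom}_\Lam(SA,B)$. By the long exact sequence of the suspended triangle of \eqref{eq:extension}, it is surjective if and only if $\bar h\circ\eta=0$ for every $\bar h:SA\to B$. The paper proves this by writing $\bar h=\ell q$ and using $q\eta=\rho$. Then $\ell\rho=0$, since $\rho$ is almost vanishing and $\ell$ is not a section, as $B$ has no stable summand $M$. Finally, stable surjectivity lifts to modules because $\bar x_2$ is a monomorphism and projectives are injective.

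Your last paragraph at most states this surjectivity, without the Auslander--Reiten argument. Even granted, it could not be used with your restricted form of $s$.
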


\begin{proof}
Set$
 C_X=\operatorname{Coker}d_{X,1}$ for each $X\in\{A,B\}$ and  $Z=\operatorname{Ker}d_{B,2}=\operatorname{Im}d_{B,1}.
$
Take $v=x-x_2$. Then we have
$
 vd_{A,1}=0,~d_{B,2}v=0,
$
and hence corresponds to a map $\bar v:C_A\longrightarrow Z$. That is, $v$ is the factorization  $
E_{A,2}\twoheadrightarrow C_A\xlongrightarrow{\bar n}
 Z\hookrightarrow E_{B,2}.
 $
Similarly, an
endomorphism $n$ of $E_{B,2}$ satisfying
$
 nd_{B,1}=0,~ d_{B,2}n=0
$
corresponds to a map $\bar n:C_B\longrightarrow Z$. Under these
identifications the map $n\mapsto nx_2$ is the restriction map
\begin{equation}\label{eq:filler-restriction}
 \Hom_\Lam(C_B,Z)\longrightarrow\Hom_\Lam(C_A,Z),
 \qquad \bar n\longmapsto\bar n\bar x_2,
\end{equation}
where $\bar x_2:C_A\longrightarrow C_B$ is induced by $x_2$.

Next, we prove that \eqref{eq:filler-restriction} is surjective. Note that in the stable category,
$C_X\simeq\Sigma X\simeq S^3X$ and $Z\simeq S^2B.
$
Under the identifications in
\Cref{prop:intrinsic-relations}, the stable form of
\eqref{eq:filler-restriction} is therefore
\begin{equation}\label{eq:stable-filler-restriction}
 (Su)^*:\ol{\Hom}_\Lam(SB,B)\longrightarrow
 \ol{\Hom}_\Lam(SA,B).
\end{equation}

Let $\eta:M\longrightarrow SA$ be the connecting morphism of
\eqref{eq:extension}. Suspending the Auslander--Reiten triangle
\eqref{eq:AR-triangle-Y} gives
\begin{equation}\label{eq:AR-triangle-M}
 \Omega(M)\longrightarrow SA\xlongrightarrow{q}M
 \xlongrightarrow{\rho}M.
\end{equation}
The map $q$ is right almost split. Since $\rho$ factors through $q$ and
$\ol{\Hom}_\Lam(M,SA)=\kk\eta$, one has
\begin{equation}\label{eq:q-eta}
 q\eta=\rho.
\end{equation}
For any $\bar h:SA\longrightarrow B$, the equality
$\ol{\Hom}_\Lam(\Omega(M),B)=0$ gives $\bar h=\ell q$ for some
$\ell:M\longrightarrow B$. The map $\ell$ is not a section, because $B$
has no stable direct summand isomorphic to $M$. Since $\rho$ is almost
vanishing, $\ell\rho=0$, and hence
$
 \bar h\eta=\ell q\eta=\ell\rho=0.
$
Applying $\ol{\Hom}_\Lam(-,B)$ to the suspended triangle of
\eqref{eq:extension} now shows that
\eqref{eq:stable-filler-restriction} is surjective.

Consequently, for any $\bar v:C_A\longrightarrow Z$ there is
$\bar n:C_B\longrightarrow Z$ such that
$\bar v-\bar n\bar x_2$ factors through a projective module $P$. The
algebra $\Lam$ is self-injective, so $P$ is injective and the factor
$C_A\longrightarrow P$ extends across the monomorphism
$\bar x_2:C_A\longrightarrow C_B$. This yields that the restriction map
\eqref{eq:filler-restriction} is surjective.

Choose $n$ with $v=nx_2$. Its factorization has the form
\[
 E_{B,2}\twoheadrightarrow C_B\xlongrightarrow{\bar n}
 Z\hookrightarrow E_{B,2}.
\]
Since $Z=\operatorname{Im}d_{B,1}$, the quotient
$E_{B,2}\twoheadrightarrow C_B$
vanishes on $Z$. It follows that $n^2=0$. Thus
$s=\mathrm{Id}_{E_{B,2}}+n$ is an automorphism. Moreover,
$sd_{B,1}=d_{B,1}$ and $d_{B,2}s=d_{B,2}$. This yields that the desired
strict automorphism of $T_B$, and $x=sx_2$.
\end{proof}

\begin{theorem}\label{thm:fixed-boundary-failure}
The pre-triangulated category
$(\mathcal{P}(\Lambda),\Sigma,\triangle_\eps)$ does not have the fixed-boundary
four-by-four property.
\end{theorem}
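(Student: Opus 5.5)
We take as the first two rows the sequences $T_A$ and $T_B$ of \Cref{lem:complete-horseshoe}. Both are distinguished for $\triangle_0$, and by \Cref{prop:intrinsic-relations} they are also distinguished for $\triangle_\eps$, since neither $A$ nor $B$ has a stable summand in $\cO$. As the first two columns we take the split distinguished triangles $E_{A,i}\xrightarrow{x_i}E_{B,i}\xrightarrow{q_i}E_{M,i}\to\Sigma E_{A,i}$ for $i=0,1$. The north-west square commutes because $x_\bullet$ is a chain map. We argue by contradiction: suppose a completion exists, with third column $x$, third row $E_{M,0}\xrightarrow{c_0}E_{M,1}\xrightarrow{c_1}C_2\xrightarrow{c_2}\Sigma E_{M,0}$, and so on.

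\textbf{Step 1 (normalization).} The first two rows, together with $x_0,x_1,x$, form a morphism $T_A\to T_B$. By \Cref{lem:relative-filler} there is a strict automorphism $(\mathrm{Id},\mathrm{Id},s,\mathrm{Id})$ of $T_B$ with $x=sx_2$. Conjugating the whole diagram by this automorphism on the second row, and replacing $y_2$ by $y_2s$, we obtain another valid completion. So we may assume $x=x_2$, which is a split monomorphism. The third column is distinguished and begins with a split mono. We claim this forces $C_2\cong E_{M,2}$ up to adding trivial summands, with $y_2$ identified with $q_2$ modulo an automorphism of $C_2$. To see this, use \textbf{TR1}, \textbf{TR3} and the fact that a cone of a split mono is its cokernel (every triangle on a split mono is isomorphic to the split one). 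The third row is therefore isomorphic to a candidate sequence $E_{M,0}\to E_{M,1}\to E_{M,2}\to\Sigma E_{M,0}$ whose first two maps are $q_1d_{B,0}$-induced, namely $d_{M,0}$ and $d_{M,1}$, because $y_0,y_1$ are the surjections $q_0,q_1$. Its third map is forced by the commutativity $c_2y_2=(\Sigma y_0)b_2$ together with the surjectivity of $y_2$.

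\textbf{Step 2 (identify the third row).} Using the degreewise split sequence \eqref{eq:complete-horseshoe}, we show that the third row coincides with $T_M$ up to an isomorphism that is the identity on the first two terms. The isomorphism on the third term is of the form $\mathrm{Id}+(\text{map factoring through }\operatorname{Coker}d_{M,1}\to\operatorname{Im}d_{M,1})$; this is the argument of \Cref{lem:relative-filler} applied to $T_M$. Such an isomorphism is compatible with first kernels: the first kernel is still $M$, with the identity. The complete Heller comparison $\delta$ is computed from an injective resolution of the first kernel. It is therefore unchanged by isomorphisms of the third term that fix the first two terms and the first kernel. Indeed, by the comparison theorem, changing the third term by such an automorphism alters $\delta$ only by the induced map on $S^3M$, and that map is the identity stably because the automorphism restricts to the identity on $\operatorname{Im}d_{M,1}$. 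Hence the comparison of the third row is $\delta_{T_M}=(\theta_0)_M$.

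\textbf{Step 3 (contradiction).} The third row must lie in $\triangle_\eps$, so \Cref{thm:heller} requires its comparison to equal $(\theta_\eps)_M=(\eps_\cO)_{\Omega^{-3}M}\circ(\theta_0)_M$. Equivalently, by \Cref{lem:twisted-lambda}, $(\lambda_\eps)_M=(\mathrm{Id}+\rho)(\lambda_0)_M$. But $\rho\neq0$ and $(\lambda_0)_M$ is a stable isomorphism, so $(\lambda_\eps)_M\ne(\lambda_0)_M$ in $\ol{\modu}\Lam$. This is a contradiction, and therefore no completion exists.

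The main obstacle is Step 2: making rigorous that every admissible third row, once the first two columns and $x$ are fixed, has Heller comparison exactly $(\theta_0)_M$ and not its twist. In particular, one must show that the residual freedom (automorphisms of $C_2$ and the choice of $y_2$, $c_2$) cannot introduce the factor $\mathrm{Id}+\rho$. I would first check this by tracking the induced map on first kernels, which the fixed columns $y_0,y_1$ pin down to $\mathrm{Id}_M$ stably; by naturality of $\delta$ along morphisms of exact sequences, this yields the claim.
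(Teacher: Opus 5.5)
Your proposal is correct and follows the paper's proof. You normalize the third vertical arrow to $x_2$ by \Cref{lem:relative-filler}, replace the third column by the split cokernel triangle, use that $q_0,q_1,q_2$ are epimorphisms to force the third row to be $T_M$, and get the contradiction from $(\lambda_\eps)_M=(\mathrm{Id}_M+\rho)(\lambda_0)_M\neq(\lambda_0)_M$.

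The obstacle you flag in Step 2 is not really there. Once an isomorphism $\phi\colon C_2\to E_{M,2}$ with $\phi y_2=q_2$ is chosen (it is unique, since $y_2=\phi^{-1}q_2$ is then an epimorphism), all three arrows of the third row are forced, and $(\mathrm{Id},\mathrm{Id},\phi,\mathrm{Id})$ is an isomorphism from that row to $T_M$. Closure of $\triangle_\eps$ under isomorphisms (\textbf{TR1}) then gives $T_M\in\triangle_\eps$ directly, so you need neither the identity-plus-nilpotent description of the third-term isomorphism nor a separate invariance argument for $\delta$.
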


\begin{proof}
Take the rows $T_A,T_B$ and the first two split columns
as above. Note that these triangles are distinguished in $\triangle_\eps$.

Suppose that the above choice admits a four-by-four completion. Its third
vertical arrow $x:E_{A,2}\longrightarrow E_{B,2}$ completes
$x_0,x_1$ to a morphism of the two rows. By
\Cref{lem:relative-filler}, an automorphism of $T_B$  transports $x$ to $x_2$. This does not
change the prescribed column, so we may assume that the third vertical
arrow is $x_2$.

The map $x_2$ is a split monomorphism. A distinguished triangle beginning
with a split monomorphism is isomorphic to its split cokernel triangle. We may therefore assume
the third column is
\[
 E_{A,2}\xlongrightarrow{x_2}E_{B,2}
 \xlongrightarrow{q_2}E_{M,2}\longrightarrow\Sigma E_{A,2}.
\]
Since every $q_i$ is an epimorphism, commutativity uniquely determines all
three arrows in the third row. They are precisely the quotient arrows in
$T_M$; see \Cref{lem:complete-horseshoe}. Hence the third row of the four-by-four completion is $T_M$.

Since $T_M$ is in $\triangle_0$, the complete comparison of $T_M$ is $\theta_{0,M}$, or,
after applying $\Omega^3$, $(\lambda_0)_M$.  By above $T_M$ is in $\triangle_\eps$. It follows that
$
 (\lambda_\eps)_M=(\mathrm{Id}_M+\rho)(\lambda_0)_M;
$
see \Cref{lem:twisted-lambda}. Combining with that $T_M$ is in both $\triangle_0$ and $\triangle_\eps$, we conclude by \Cref{thm:heller} that $(\lambda_\eps)_M=(\lambda_0)_M$, and hence $(\mathrm{Id}_M+\rho)(\lambda_0)_M=(\lambda_0)_M$. This is a contradiction 
because $(\lambda_0)_M$ is an
isomorphism and $\rho\ne0$. This completes the proof. 
\end{proof}

We are now ready to prove the main result of this article.

{\bf Proof of \Cref{thm:main}.} By \Cref{cor:pretriangulation},
$\triangle_\eps$ satisfies $\mathbf{TR1}$--$\mathbf{TR3}$. If it satisfied
$\mathbf{TR4}$, then \Cref{lem:tr4-implies-fixed-boundary} would give the
four-by-four property, contrary to
\Cref{thm:fixed-boundary-failure}. Hence $\mathbf{TR4}$ fails.

\begin{ack}
    The mathematical search leading to the construction was carried out with substantial assistance from Eureka, an autonomous multi-agent mathematical-reasoning system. We also used GPT-5.6 Sol to check and revise this manuscript. We would like to thank the JIUCHONG team at the University of Science and Technology of China for providing us with access to Eureka. We are grateful to Jie Li and Tianyang Sun for their assistance in using Eureka. The third author thanks Bernhard Keller and Amnon Neeman, separately, for helpful private discussions and for patiently answering his question about whether a pre-triangulated category is triangulated.

Xiao-Wu Chen was supported by 
National Key R\textup{\&}D Program of China (No. 2024YFA1013801), and
the National Natural Science Foundation of China (No.s 12325101 and 12131015). Jian Liu was supported by the National Natural Science Foundation of China (No. 12401046). Xue-Song Lu and Chencheng Zhang were supported by National Natural Science Foundation of China (No. 12131015).
\end{ack}

\end{document}